\documentclass[10pt]{article}
\font\smallit=cmti10

\usepackage{amssymb,latexsym,amsmath,epsfig,amsthm} 

\usepackage{amssymb,amsmath,amsthm,latexsym,booktabs,todonotes,color}
\usepackage{tikz,comment,url}
\usepackage{xcolor}
\usetikzlibrary{arrows.meta,
                decorations.markings}
\usepackage{soul} 
\theoremstyle{definition}
\newtheorem{definition}{Definition}
\newtheorem{notation}[definition]{Notation}

\newtheorem{remark}[definition]{Remark}

\theoremstyle{plain}
\newtheorem{lemma}[definition]{Lemma}
\newtheorem{proposition}{Proposition}
\newtheorem{theorem}[definition]{Theorem}

\newtheorem{example}[definition]{Example}

\usepackage{hyperref}
\hypersetup{
    colorlinks=true,
    linkcolor=blue,
    filecolor=magenta,
    urlcolor=cyan,
    pdftitle={Overleaf Example},
    pdfpagemode=FullScreen,
}

\makeatletter

\renewcommand\section{\@startsection {section}{1}{\z@}
{-30pt \@plus -1ex \@minus -.2ex}
{2.3ex \@plus.2ex}
{\normalfont\normalsize\bfseries\boldmath}}

\renewcommand\subsection{\@startsection{subsection}{2}{\z@}
{-3.25ex\@plus -1ex \@minus -.2ex}
{1.5ex \@plus .2ex}
{\normalfont\normalsize\bfseries\boldmath}}

\renewcommand{\@seccntformat}[1]{\csname the#1\endcsname. }

\makeatother

\begin{document}


\begin{center}
\uppercase{\bf Floridian Solitaire: A New\\ Variant of Bulgarian Solitaire
}
\vskip 20pt
{\bf Aaron Meyerowitz}\\
{\smallit Department of Mathematical Sciences, Florida Atlantic University, Boca Raton, FL 33431, USA}\\
{\tt meyerowi@fau.edu}\\
\vskip 10pt
{\bf Stephen J. Curran}\\
{\smallit Department of Mathematics, University of Pittsburgh at Johnstown, Johnstown, PA 15904, USA}\\
{\tt sjcurran@pitt.edu}\\
\vskip 10pt
{\bf Stephen C. Locke}\\
{\smallit Department of Mathematical Sciences, Florida Atlantic University, Boca Raton, FL 33431, USA}\\
{\tt lockes@fau.edu}\\
\vskip 10pt
{\bf Richard M. Low }\\
{\smallit Department of Mathematics and Statistics, San Jose State University, San Jose, CA 95192, USA}\\
{\tt richard.low@sjsu.edu}\\
\end{center}

\centerline{\bf Abstract}
\noindent

 Bulgarian solitaire is a well-studied, no-choice, no-loss, one-player game involving stacks of cards.
 More formally, it is a self-map on the set of partitions of a fixed integer $n.$
 As a finite dynamical system, its long-term behavior is well understood. Every trajectory ends in a cycle.
 The partitions that are in a cycle are parameterized by binary vectors, and the cycles by binary necklaces.
Call a partition {\bf separated} if distinct part sizes differ by at least two.
The vast majority of partitions belonging to a cycle are not separated.
Motivated by this fact, we consider a variant where the player has choices, but is restricted to separated partitions and, if unable to make a legal move, may lose.
We prove that for $n>73$, there are cycles, and hence winning initial positions.
We analyze the game for small values of $n$ and describe computations which, together with our main result, show that there are  cycles  for $n\in \{2,6,8,11,14,16,18,21\}$ and for $n \ge 23$ but for no other $n.$

\thispagestyle{empty}
\baselineskip=12.875pt
\vskip 30pt

\section{Introduction and notation} \label{S:intro}

Bulgarian solitaire was first introduced to the mathematical
community around 1980 by Konstantin Oskolov of the
Steklov Mathematical Institute in Moscow.
The interested reader may refer to B. Hopkins' article \cite{Hopkins2017} on the history of
Bulgarian solitaire for further information.
Bulgarian solitaire begins with an array of $n>1$ cards placed in one or more stacks.
On each turn, the player removes one card from each stack and forms a new stack using the removed cards. In other words, the positions are the partitions of $n$ and, if there are $p$ parts,  the move is to reduce each of these parts by $1$ and create a new part of size $p$, a $p$-{\bf part}. We will call this the $\Omega$-move, write $\Omega(\lambda)$ for the result of applying it to $\lambda,$ and, for a set $S$ of partitions, let $\Omega(S)=\{\Omega(\lambda): \lambda \in S\}.$ When we say that a part is {\bf reduced}, we always mean reduced in size by $1$.

This is a finite dynamical system. For a given $n,$ the {\bf game graph} is the directed graph with nodes corresponding to the partitions of $n$,  and an arc $\lambda \rightarrow \Omega(\lambda)$ for each partition $\lambda$ of $n$. So, the trajectory from each position ends in a cycle of length $1$ (a fixed point) or more. We call a partition that belongs to a cycle a {\bf recurrent state} or {\bf recurrent partition}.

\begin{notation}
    The partition $\lambda=a_1 + a_2 + \cdots +a_k$ is denoted by $a_1,a_2,\ldots,a_k$.
     In the case of a repeated term,
    we use frequency notation to indicate
    the number of terms.
    Thus, the $7$-part partition $\lambda=1+1+1+5+8+8+8$ of $n=32$ will be denoted by
     $1^3,5,8^3$ or just $1^35\,8^3$.
     Then, $\Omega(\lambda)=(0^34\,7^3)7 =4\,7^4.$
\end{notation}

For the triangular number
\begin{equation*}
    n=\binom{k+1}{2}=\binom{k}{2}+k=\big(1+2+\cdots + (k-1)\big)+k,
\end{equation*}
the partition $1\, 2\, 3\, \ldots\, k$ is a fixed point, and,  starting from
any initial partition of $n$, the trajectory will reach this fixed point.
For non-triangular numbers $n,$ there are no fixed points.
The complete characterization of the limit cycles is well known.
The recurrent states are parameterized by binary vectors  and the cycles are parameterized by binary necklaces.
This makes Theorem \ref{124cycles}, which is well known, immediate.

\begin{theorem} \label{CyclesTheorem} (\cite{AkinDavis1985, Brandt1982}). Suppose $n=\binom{k}{2}+r=1+2+\cdots +(k-1)+r$ where $1\le r \le k$.
     A partition $\lambda_1$ of $n$ is a recurrent state
    of Bulgarian solitaire if and only if it has the form
    \begin{equation*}
    \lambda_1=\delta_0, 1+\delta_1, 2+\delta_2,\ldots, k-1+\delta_{k-1},
    \end{equation*}
    where $\delta_i\in\{ 0,1\}$ and $\sum_{i=0}^{k-1} \delta_i =r$.
    Furthermore, the
    $\Omega$-move  $\lambda_1 \rightarrow \lambda_2=\Omega(\lambda_1)$
    has the effect of
    applying a cyclic shift to the corresponding binary vectors:
    \begin{equation*}
\vec{\Delta_1}=(\delta_0,\delta_1,\delta_2,\ldots,\delta_{k-1})
\rightarrow
\vec{\Delta_2}=(\delta_1,\delta_2,\ldots,\delta_{k-1},\delta_0).
    \end{equation*}
 Hence, the cycles are parameterized by binary necklaces,
 equivalence classes of binary vectors under cyclic shift.
\end{theorem}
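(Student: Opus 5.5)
The plan is to prove the theorem in three stages. First, the shift property on the claimed set. Second, that every partition of that form is recurrent. Third, the harder converse, that every recurrent partition has that form, which I would get from a potential-function argument.

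\emph{Step 1: the shift.} Take $\lambda_1=\delta_0,1+\delta_1,\ldots,k-1+\delta_{k-1}$ with $\sum\delta_i=r$, and let $p$ be the number of nonzero parts. Part $0+\delta_0$ is nonzero exactly when $\delta_0=1$, and all other parts are positive, so $p=k-1+\delta_0$. Applying $\Omega$ turns part $i+\delta_i$ into $(i-1)+\delta_i$ for $i\ge1$, so these become the entries $0+\delta_1,\ldots,(k-2)+\delta_{k-1}$. The part $\delta_0$ becomes $0$ and disappears. The new part has size $k-1+\delta_0$, which fills position $k-1$ with bit $\delta_0$. The result is therefore encoded by $(\delta_1,\ldots,\delta_{k-1},\delta_0)$, which again has sum $r$. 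Since the multiset of parts determines the sorted sequence, the encoding is well defined. The only point to check is that the order of parts may change; this is harmless because $i+\delta_i\le i+1\le (i+1)+\delta_{i+1}$, so the encoding is read off the sorted order and parts listed in this way are automatically nondecreasing.

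\emph{Step 2: these states are recurrent.} $\Omega$ acts on the set $R$ of such partitions as a cyclic shift of length $k$. So $\Omega^k$ is the identity on $R$, and every element of $R$ lies on a cycle. The necklace statement follows directly.

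\emph{Step 3: the converse.} I would use the classical potential argument (Brandt; Akin–Davis), representing a partition by its Young diagram placed on the diagonals of a staircase. Here are the steps.
\begin{itemize}
\item Represent $\lambda$ by cells $(i,j)$, where column $j$ holds part $j$ after sorting in decreasing order, shifted so that the move becomes: remove the bottom row and insert it as a column, then re-sort.
\item Without re-sorting, this move sends a cell on diagonal $d=i+j$ to diagonal $d$ in a rotated position within that diagonal.
\item The re-sorting step only moves cells to diagonals of lower or equal index.
\item So the energy $E(\lambda)=\sum_{\text{cells}}(i+j)$ is nonincreasing. On a cycle it is therefore constant, and sorting never lowers any cell.
\item Let $L$ be the largest $d$ such that all diagonals below $d$ are full. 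Since $n=\binom{k}{2}+r$ with $1\le r\le k$, $E$ is minimized exactly when diagonals $1,\ldots,k-1$ are full and the $r$ extra cells lie on diagonal $k$. These are precisely the partitions of the form $\delta_0,1+\delta_1,\ldots$.
\item Finally, show that any state with a cell on diagonal $>k$, or with a hole on diagonal $\le k-1$, eventually has a sorting step that strictly drops energy.
\end{itemize}

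For that last point, I would use the rotation of each diagonal $d$ (period $d$ or $d+1$). A cell on a high diagonal together with a hole on a lower diagonal will, after a number of moves bounded by lcm-type periods, align so that the corresponding column is shorter than its neighbour. The sort then strictly drops $E$. The main obstacle is making this alignment argument rigorous, because different diagonals rotate with different periods ($d+1$ on diagonal $d$). I would handle it by tracking the highest occupied diagonal $D>k$ and the lowest non-full diagonal $D'<D$. I would then argue that, since $D$ and $D'$ have coprime-ish distinct periods, their orbits cannot stay disjoint forever without a sort, which forces a strict decrease of $E$ and contradicts recurrence.
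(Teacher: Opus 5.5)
The paper gives no proof of this theorem; it quotes it from Brandt and Akin--Davis. Your plan is the classical one: the shift on the set $R$, then the diagonal/energy argument for the converse. Steps 1 and 2 are fine. So is the reduction in Step 3: since $E$ is nonincreasing and periodic along a cycle, every re-sort on a cycle is trivial. Hence the orbit of a recurrent $\lambda$ is $\rho^t(\lambda)$, $t\ge 0$, where $\rho$ is pure rotation of the diagonals, and each $\rho^t(\lambda)$ must be a Young diagram.

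\textbf{The gap.} The last bullet is the whole content of the converse, and you leave it as a heuristic that does not work as stated.
\begin{itemize}
\item The periods of the top occupied diagonal $D$ and a lowest non-full diagonal $D'$ need not be coprime (e.g.\ $6$ and $4$).
\item Cells on different diagonals never share a position, so ``their orbits cannot stay disjoint'' is not the relevant event. You need a time at which a specific hole lies weakly above-left of (is dominated by) a specific cell.
\item With your own indexing (diagonal $d$ holds $d$ cells, so diagonals $1,\dots,k-1$ carry $\binom{k}{2}$ cells), the period of diagonal $d$ is $d$, not $d+1$.
\item Restricting to $D>k$ misses the case $r<k$ where a diagonal below $k$ has a hole but the top occupied diagonal is exactly $k$.
\end{itemize}

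\textbf{The fix.} Pass to adjacent diagonals, whose periods are consecutive integers. If $\lambda$ is not of the stated form, counting gives a non-full diagonal below an occupied one. Let $D$ be the top occupied diagonal and $d$ the largest non-full diagonal with $d<D$. Then diagonal $d+1$ is nonempty: it is full, or it is $D$.

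Take parts as rows and cells $(i,j)$ with $i,j\ge 1$. Diagonal $d$ is $\{(i,d+1-i)\}$, and $\rho$ is $(i,j)\mapsto(i+1,j-1)$ for $j\ge 2$ and $(i,1)\mapsto(1,i)$. So row indices advance by one modulo $d$ on diagonal $d$ and modulo $d+1$ on diagonal $d+1$.

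Fix a hole $h$ on diagonal $d$ and a cell $c$ on diagonal $d+1$. Since $\gcd(d,d+1)=1$, the Chinese remainder theorem gives a $t$ at which $h$ is at $(1,d)$ and $c$ is at $(2,d)$. Then $\rho^t(\lambda)$ contains $(2,d)$ but not $(1,d)$, so it is not a Young diagram, contradicting recurrence.

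Hence a recurrent state consists of full diagonals, then at most one partial diagonal, then empty ones. For $n=\binom{k}{2}+r$ with $1\le r\le k$, this is exactly the stated form.

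You can keep a non-adjacent pair $D'<D$. But then you must use that $\gcd(D,D')$ divides $D-D'$, so it is smaller than the number $D-D'+1$ of row offsets at which the hole is dominated by the cell. ``Coprime-ish'' is not an argument.
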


We call the binary vector $\vec{\Delta}=\vec{\Delta}(\lambda)=(\delta_0,\delta_1,\delta_2,\ldots,\delta_{k-1})$ corresponding to a recurrent state an {\bf excess vector}. So, an excess vector is any binary vector with $r$ $1$'s for some $r>1.$  If $\vec{\Delta}(\lambda)$ has $\delta_0=0$, then $\lambda$ has only $k-1$ parts. For $n=\binom{k}{2}+r$ where $1\le r < k$, in every cycle there  are partitions with $k-1$ parts and the rest have $k$ parts.
Then, each cycle has (minimum) period a divisor $d>1$ of $k$, where $d$ is the minimum cyclic period of $\vec{\Delta}(\lambda)$. The first $d$ bits repeat $\frac{k}d$ times and $r=s\cdot\frac{k}{d}$ where $0<s<d$ is the number of $1$'s in those first $d$ bits.  In the triangular case, when $r=k$,
 the only possible excess vector is $(1,1,\ldots,1)$.

\begin{remark} \label{re:manycomponents}
    Note that for $n=\binom{k}2+r$, there are $\binom{k}{r}$ recurrent states in Bulgarian solitaire and each belongs to a cycle of length $d$ for some divisor $d$ of $k$. Hence, the number of connected components is bounded below by$\left \lceil \frac1k  {\binom{k}{r}} \right \rceil.$
\end{remark}

Halfway between the triangular numbers $\binom{k}{2}$ and $\binom{k+1}{2}$ is the {\bf half-square}

\begin{equation*}
    \left\lfloor \frac{k^2}{2} \right\rfloor=\left\{
  \begin{array}{cc}
      2t^2 &\text{ if $k=2t$}\phantom{+1..}\quad \\
      2t^2+2t &\text{ if $k=2t-1$}.\quad
      \end{array}
\right.
\end{equation*}

The triangular numbers start $0,1,3,6,10,15,21\ldots$ and halfway between them are the half-squares $0,2,4,8,12,18\ldots$\ .  Note that for $k=3$, the half-square \newline  $\left\lfloor \frac{k^2}{2} \right\rfloor=4$ is also a square. This happens for the solutions $k=1,3,17,\ldots$ of the Diophantine equation $k^2=2j^2+1.$ The half-squares and adjacent values appear in
Theorem \ref{separated}. The half-squares play an essential role in Section \ref{Sec_Par4}.

 For future use, we note three well-known applications of this correspondence between cycles in Bulgarian solitaire with minimum period $d$ and excess vectors with minimum cyclic period $d.$

 \begin{theorem} \label{124cycles}
 In Bulgarian solitaire,
 \begin{enumerate}
     \item The only fixed points  are for a triangular number $n=\binom{t+1}2=\binom{t}2+t$ for $t\ge 1$, and the only fixed point for that $n$ is the partition $1\, 2\,3\, \ldots\, t$ corresponding to the length $t$ excess vector $\vec{\Delta_0}=(1,1,\ldots)$. For any partition $\lambda$ of $n$, the trajectory starting at $\lambda$ ends at this fixed point.

     \item The only $2$-cycles  occur for a half-square $n=2t^2=\binom{2t}2+t$ and the only such  cycle is

    \begin{equation*}
        1^2 3^2 \ldots (2t-1)^2 \leftrightarrow
        2^2 4^2 \ldots (2t-2)^2 (2t),
    \end{equation*} corresponding to the length $2t$ excess vectors

 \begin{equation*} \vec{\Delta_1}=(1,0,1,0,\ldots) \leftrightarrow \vec{\Delta_2}=(0,1,0,1,\ldots).\end{equation*}

\item The only $4$-cycles are for \begin{itemize}
    \item $n=8t^2-t=\binom{4t}2+t,$
    \item $n=8t^2 = \binom{4t}2+2t,$ and
    \item $n=8t^2+t=\binom{4t}2+3t$,
\end{itemize}

with one $4$-cycle for each such $n$ corresponding,
respectively,  to the following cycles of length $4t$ vectors, where the first $4$ bits of each vector repeat $t$ times:

 \begin{itemize}
     \item $ (1,0,0,0,\ldots) \rightarrow  (0,0,0,1,\ldots) \rightarrow   (0,0,1,0, \ldots) \rightarrow (0,1,0,0, \ldots)  \rightarrow$\ ,
     \item $ (1,1,0,0,\ldots) \rightarrow  (1,0,0,1,\ldots) \rightarrow   (0,0,1,1, \ldots) \rightarrow (0,1,1,0, \ldots)  \rightarrow$\ , and
      \item $ (1,1,1,0,\ldots) \rightarrow (1,1,0,1, \ldots)  \rightarrow (1,0,1,1, \ldots)\rightarrow (0,1,1,1, \ldots) \rightarrow \ .$
 \end{itemize}

    \end{enumerate}
\end{theorem}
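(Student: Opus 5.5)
The plan is to read everything off Theorem \ref{CyclesTheorem}. A cycle of length exactly $d$ in Bulgarian solitaire corresponds to an excess vector of length $k$ whose minimum cyclic period is $d$. Here $n=\binom{k}{2}+r$ with $1\le r\le k$. The vector is then the first $d$ bits repeated $k/d$ times, and $r=s\cdot\frac{k}{d}$, where $s$ is the number of $1$'s among those $d$ bits. For each $d\in\{1,2,4\}$ I will list the binary words of length $d$ with primitive period $d$, group them into necklaces, and compute $n$ and the partitions from $k$ and $r$.

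For $d=1$ the word $(0)$ is impossible because $r\ge1$, so the only option is $(1)$. Then $r=k$ and $n=\binom{k}{2}+k=\binom{k+1}{2}$, and the state is $1\,2\,\ldots\,k$; this gives the fixed-point statement with $t=k$. Since the excess vector is forced, this cycle is the only recurrent state for that $n$. Every trajectory ends in a cycle, so every trajectory reaches it.

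For $d=2$ the primitive words are $(1,0)$ and $(0,1)$, which form one necklace. Write $k=2t$. Then $s=1$ and $r=t$, so $n=\binom{2t}{2}+t=2t^2$. The two partitions come from adding $\delta_i$ to $i$:
\begin{itemize}
\item $(1,0,1,0,\ldots)$ gives $1,1,3,3,\ldots,2t-1,2t-1$;
\item $(0,1,0,1,\ldots)$ gives $0,2,2,4,4,\ldots,2t-2,2t$, that is, $2^24^2\cdots(2t-2)^2(2t)$ once the $0$ is dropped.
\end{itemize}
Uniqueness for a given $n$ needs one more check. The representation $n=\binom{k}{2}+r$ with $1\le r\le k$ is unique, so $k$ is determined by $n$, and then $d=2$ forces $r=k/2$. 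Hence the $d=2$ necklace is the only one of period $2$.

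For $d=4$ the primitive words have $s\in\{1,2,3\}$. The case $s=2$ excludes $1010$ and $0101$, which have period $2$, leaving $1100,1001,0011,0110$, a single necklace. The cases $s=1$ and $s=3$ are each a single necklace of four words. With $k=4t$ we get $r=st$, so $n=\binom{4t}{2}+st=8t^2-2t+st$, which gives $8t^2-t$, $8t^2$ and $8t^2+t$. The orderings listed in the statement follow from the cyclic shift $(\delta_0,\ldots)\mapsto(\delta_1,\ldots,\delta_0)$.

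The main obstacle is uniqueness for each $n$: one must make sure that no other $k$ represents the same $n$, and that the three families never collide. Because $1\le r\le k$, the value of $n$ determines $k$ and $r$ uniquely. For fixed $k=4t$ the three families have the distinct values $r=t,2t,3t$. So exactly one $4$-cycle exists for each such $n$.
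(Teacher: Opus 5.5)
Your proposal is correct and takes the same approach as the paper: both read the result directly off Theorem \ref{CyclesTheorem} by listing the binary vectors of minimum cyclic period $1$, $2$, and $4$ and grouping them into necklaces. You also spell out details the paper leaves implicit, namely the computation of $n$ from $k$ and $r=s\cdot\frac{k}{d}$, the explicit partitions, and the uniqueness of the representation $n=\binom{k}{2}+r$ with $1\le r\le k$, which rules out collisions between families.
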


\begin{proof} Among nonzero binary vectors,
    \begin{enumerate}
        \item The only one with period $1$  of length $t$ is $\vec{\Delta_0}$,  and it is the only possible excess vector for $n=\binom{t}2+t$.
        \item The only ones with minimum period $2$ and length $2t$ are $\vec{\Delta_1}$ and $\vec{\Delta_2}$, which are cyclic shifts of each other.
        \item The only ones of period $4$ and length $4t$ are those given.
    \end{enumerate}
\end{proof}

 In contrast to the characterization of the recurrent states, much less is known about the trajectories from the transient states to the cycles. In the triangular case, the fixed point $1\,2\,3\ldots t$ is known to be reached in at most $t(t-1)$ moves \cite{Igusa1985}, a result from 1985. The general case remains a topic of active research \cite{Pham2023}.

 We call a partition $\lambda$ of $n$ {\bf separated} if there are no consecutive parts in it and define a {\bf defect} of $\lambda$ to be a $j>0$ so that
  $\lambda$ has parts of sizes $j$ and $j+1.$ So, a partition is separated exactly if it has no defects. The sequence giving the number of separated partitions of $n$,   i.e. ``partitions of $n$ in which any two distinct parts differ by at least $2$'', is sequence \href{https://oeis.org/A116931}{A116931}  in the On-Line Encyclopedia of Integer Sequences (OEIS) \cite{Sloane99}.
  The partitions just seen in the cycles of period $2$ are separated.  It turns out that for most $n$, there are no separated partitions in cycles.

\section{ Overview of the paper}

In Bulgarian solitaire, for most $n$ there are no separated partitions in cycles, each one has defects. As shown in Theorem \ref{separated}, recurrent partitions without defects occur only at the half-squares  $n=2t^2=\binom{k}{2}+r=\binom{2t}{2}+t$ and $2t^2+2t=\binom{2t+1}{2}+t$, as well as at the adjacent values $2t^2-1$ and $2t^2+2t+1.$ Then, of the $\binom{k}{r}$ recurrent partitions of $n$, one or two are defect-free. Those that are separated are variations of the partitions in $2$-cycles. Recall that each half-square is midway between two triangular numbers $\binom{k}2$ and $\binom{k+1}2$.  As $n$ moves away from a half-square and closer to the bounding triangular numbers, the least number of defects of any recurrent partition increases. In Section \ref{sec:defects}, we justify these claims about separated partitions and find all that do occur. However, these results are only for motivation and are not used elsewhere.

With this motivation, in Section \ref{SecFS}, we modify the rules of Bulgarian solitaire
to arrive at a variant, Floridian solitaire, in which the player has choices, but only separated partitions are allowed.
So, the player may end up with no moves and lose.
On each turn, the player makes two moves.
Starting from $\lambda_1,$ the first move, called an $\alpha$-move,
is to choose $s \ge 1$ parts, reduce them by $1$, and add a new $s$-part,
resulting in $\lambda_2$.
The second move is to $\lambda_3=\Omega(\lambda_2).$

In Definition \ref{Note_G_n}, we give a directed bipartite graph $G_n$ that encodes the possible turns in Floridian solitaire.  The two moves of a turn correspond to a pair of arcs $\lambda_1 \rightarrow \lambda_2$ and $\lambda_2\rightarrow \lambda_3$ forming a path. We also define a graph $H_n$ with an arc $\lambda_1 \rightarrow \lambda_3$ for each such path.

In  Section \ref{Sec_Computations}, we describe computational results for the graphs $G_n$ and $H_n$ for $n<100.$ We do not rely on these claims in the rest of the paper, but they do explain some of the motivation for the rest of the paper.  There are cycles in $G_n$ for all even $n \le 100$ except $n=4,10,12,20,22$,  and for $n=11$ and all odd $21 \le n \le 99$. The graph induced by the cycles seems to be large and highly connected for  $n>40$. This very strongly suggests that, with the few exceptions noted, every graph $G_n$ has cycles.  Our main result is that $G_n$ has at least one cycle for every $n>73$.
Every cycle in $G_n$ has even length.  Theorem \ref{FS2cycles} shows that $2$-cycles in Floridian solitaire are rare. However, unlike Bulgarian solitaire, there are many $4$-cycles for all large enough $n$.

Our main result, Theorem \ref{thm4cyclesInGn},
asserts that there is always a $4$-cycle for $n>73.$
We prove this using several parametric families. In Section  \ref{Sec_cyc4}, we provide some notation and examples, including a family of $4$-cycles $Q(x,y,z)$.

 In Section \ref{Sec_Par4}, we show in Proposition \ref{prop4cyclesINnMostGn} that $G_n$ has a $4$-cycle $Q(x,y,z)$ for most $n$. The exceptions are all close to a half-square. To prove Theorem \ref{thm4cyclesInGn}, we then provide $16$ one-parameter families that  cover the values $n>73$ missed by the family $Q(x,y,z)$, as well as some smaller $n$.  The methods used to provide $4$-cycles for these exceptional cases are applicable to cycles of arbitrary even length. We conclude with some open questions.

 \section{ Defects in Recurrent Partitions of Bulgarian Solitaire} \label{sec:defects}

We now justify the claims made about separated recurrent partitions of Bulgarian solitaire. These results are for motivation and are not used elsewhere. Theorem \ref{CyclesTheorem} makes it routine to find the separated partitions that occur in cycles and to find the least defects possible for a recurrent partition of a given $n.$ This is done in Theorems \ref{separated} and \ref{th:mindefects}.

We first consider the fixed points, $2$-cycles, and $4$-cycles from Theorem \ref{124cycles}.
\begin{itemize}
 \item  The fixed points are not separated; they have $t$ parts, $t-1$ defects, and length $t$ excess vector $(1,1,\ldots)$.
  \item The 2-cycles
\begin{equation*}
        1^2 3^2 \ldots (2t-1)^2 \leftrightarrow
        2^2 4^2 \ldots (2t-2)^2 (2t)
    \end{equation*}
    are the only cycles with all members separated.
    This is related to the fact that the  excess vectors with cyclic period $2$,
    \begin{equation*} (1,0,1,0,\ldots) \leftrightarrow (0,1,0,1,\ldots) \end{equation*}
    are exactly the non-constant binary vectors all of whose shifts have alternating entries.
\item The three 4-cycles with excess vector of length $8,$ for $n=30,32,$ and $34,$ are
 \begin{itemize}
    \item $1^2 2\,3\,5^2 6\,7 \rightarrow 1\,2\,4^2 5\,6\,8 \rightarrow
 1\,3^2 4\,5\,7^2 \rightarrow 2^2 3\,4\,6^2 7 \rightarrow,$
    \item $1\, 2^2 3\,5\, 6^2 7 \rightarrow 1^2 2\, 4\, 5^2 6\,8 \rightarrow
 1\,3\, 4^2 5\, 7\,8 \rightarrow 2\, 3^2 4\,6\,7^2 \rightarrow,
 \text{ and}$
    \item $1\,2\,3^2 5\,6\,7^2 \rightarrow 1\,2^2 4\,5\,6^2 8 \rightarrow
 1^2 3\,4\,5^2 7\,8 \rightarrow 2\,3\,4^2 6\,7\,8 \rightarrow$.
 \end{itemize}
 The partition $1\, 2^2 3\,5\, 6^2 7$ with excess vector $(1,1,0,0,1,1,0,0)$ has $4$ defects while the partition $1\,3\, 4^2 5\, 7\,8$  with excess vector $(0,0,1,1,0,0,1,1)$ has $3$ defects.
\end{itemize}

Looking at these partitions and their excess vectors illustrates that defects of a recurrent partition $\lambda$ correspond to the cases of $\delta_i=\delta_{i+1}$  in $\vec{\Delta}(\lambda)$. This includes $\delta_0=\delta_1=1$, but $\delta_0=\delta_1=0$ is excluded from the count. For $1 \le r <k$, a recurrent partition of $n=\binom{k}{2}+r$ has at most  $k-2$ defects. The unique partition with $k-2$ defects is the partition  $1\,2\,3\,\ldots\,k-1$ of $\binom{k}{2}$ with an additional part of size $r$. The corresponding excess vector has all $r$ $1$'s at the start.

Theorem \ref{CyclesTheorem} gives a straightforward answer to the question of which separated partitions occur in a cycle of Bulgarian solitaire. Note that the separated partitions not in $2$-cycles are variations of the ones that are

\begin{theorem} \label{separated}
     The only separated partitions belonging to a cycle in Bulgarian solitaire are:
     \begin{itemize}
     \item The two partitions $1^2 3^2 \ldots (2t-1)^2$ and  $2^2 4^2 \ldots (2t-2)^2 (2t) $ of  $n=2t^2$. \newline They form a $2$-cycle.
     \item The partition $1\,3^2 5^2 \ldots (2t-1)^2 $ of $n=2t^2-1.$ \newline Its cycle has length $2t.$
      \item The two partitions $1\,3^2 5^2 \ldots (2t-1)^2 (2t+1)$ and $2^24^2\ldots(2t)^2$ of $n=2t^2+2t$.\newline They belong  to a cycle of length $2t+1$ and are consecutive.
     \item The partition $1^2 3^2 \ldots (2t-1)^2 (2t+1)$ of $n=2t^2+2t+1.$ \newline Its cycle has length $2t+1$.
      \end{itemize}
\end{theorem}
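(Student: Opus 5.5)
The plan is to use Theorem \ref{CyclesTheorem} to turn the question into a combinatorial condition on excess vectors. Write $n=\binom{k}{2}+r$ with $1\le r\le k$, so a recurrent $\lambda$ has parts $i+\delta_i$ for $0\le i\le k-1$, where a part of size $0$ (when $\delta_0=0$) is discarded. The first step is to prove the observation made just before the statement: a defect of $\lambda$ is the same thing as an index $i$ with $\delta_i=\delta_{i+1}$, excluding $\delta_0=\delta_1=0$. The argument is short. Part sizes $i+\delta_i$ are nondecreasing in $i$, and consecutive entries either coincide or differ by $1$ or $2$. The only way to get two part sizes differing by exactly $1$ is from adjacent indices with equal $\delta$'s. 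When $\delta_0=\delta_1=0$, the ``part'' $0$ does not exist, so no defect arises there.

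The second step reads off when $\lambda$ is separated. This happens exactly when $\delta_{i+1}\ne\delta_i$ for all $1\le i\le k-2$, and also when $(\delta_0,\delta_1)\ne(1,1)$. So $(\delta_1,\dots,\delta_{k-1})$ alternates, and there are two choices for it. Then $\delta_0$ is free, except that $\delta_0=1$ is forbidden when $\delta_1=1$. This leaves at most three candidate vectors:
\begin{equation*}
(0,1,0,1,\ldots),\qquad (1,0,1,0,\ldots),\qquad (0,0,1,0,1,\ldots).
\end{equation*}

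The third step computes $r=\sum\delta_i$ for each candidate, splitting by the parity of $k$. For $k=2t$, the three candidates have $r=t,\ t,\ t-1$. This gives $n=2t^2$ for the first two and $n=2t^2-1$ for the third. For $k=2t+1$, the three candidates have $r=t,\ t+1,\ t$, giving $n=2t^2+2t,\ 2t^2+2t+1,\ 2t^2+2t$. Writing out the parts $i+\delta_i$ then yields exactly the listed partitions. For example, $k=2t+1$ with $(0,0,1,0,1,\dots,0,1)$ gives $1\,3^2\ldots(2t-1)^2(2t+1)$, and with $(0,1,0,\dots,1,0)$ gives $2^24^2\ldots(2t)^2$.

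Finally, I determine cycle lengths from minimal cyclic periods, using Theorem \ref{CyclesTheorem}. The alternating vectors of even length have period $2$, which recovers the $2$-cycle of Theorem \ref{124cycles}. The vector $(0,0,1,0,\ldots,1)$ of length $2t$ contains a single occurrence of two adjacent $0$'s, so it is aperiodic and its cycle has length $2t$. Vectors of odd length $2t+1$ with $r$ coprime to... more simply, with $r\in\{t,t+1\}$, have period dividing $\gcd(2t+1,r)=1$ only if constant. Since they are nonconstant, their period is $2t+1$. The two partitions for $n=2t^2+2t$ are consecutive because $(0,1,0,\ldots,1,0)$ shifted cyclically once becomes $(1,0,\ldots,1,0,0)$. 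That vector is not one of the candidates, but shifting the other way gives $(0,0,1,0,\ldots,1)$. So the two are adjacent in the cycle, and I would check which direction $\Omega$ goes.

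The main obstacle is bookkeeping rather than ideas. It consists of handling the $\delta_0$ boundary case correctly (an absent zero part, and the exclusion of $\delta_0=\delta_1=0$), and of getting the small-$t$ edge cases and the direction of consecutiveness right.
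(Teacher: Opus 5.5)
Your proposal is correct and takes essentially the paper's route. Defects correspond to indices with $\delta_i=\delta_{i+1}$, with the leading $0,0$ exempt, so the separated recurrent states come from at most three near-alternating excess vectors, and their sums $r$ are read off by the parity of $k$; your minimal-period argument for the cycle lengths spells out what the paper leaves implicit.

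Two small repairs are needed. For $k=2t$ the third vector is $(0,0,1,0,1,\ldots,1,0)$, ending in $0$, so what rules out a proper period is its unique cyclic run of three $0$'s, not a count of linear $0,0$ pairs. For $k=2t+1$, it is the number of repetitions $k/d$, not the period $d$, that must divide $\gcd(k,r)=1$.
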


\begin{proof}

  Two positive parts, $i+\delta_i$ and $i+1+\delta_{i+1}$ differ by $1$ exactly if  $\delta_{i}=\delta_{i+1}.$
  So, a recurrent state is separated exactly if the entries of the excess vector \newline   $ \vec{\Delta}=(\delta_0,\delta_1,\delta_2,\ldots,\delta_{k-1})$, either alternate or alternate except that $\delta_0=\delta_1=0$. \\

  For $k=2t$ even, the possible excess vectors are: \begin{itemize}
   \item   $(0,1,0,1,\ldots,0,1)$  and its shift
$(1,0,1,0,\ldots ,1,0)$ with  $r=k-r=t$ as in the middle case of Theorem  \ref{124cycles}, and
   \item $(0,0,1,0,1,\ldots,0,1,0)$ with $r=t-1$ and $k-r=t.$

  \end{itemize}
For $k=2t+1$ odd, the possible excess vectors are: \begin{itemize}
   \item $(0,0,1,0,1\ldots,0,1)$ and its shift $(0,1,0,1,\ldots,0,1,0)$ with $r=t$ and $k-r=t+1$  as well as
    \item   $(1,0,1,0,\ldots,1)$ with $r=t+1$ and $k-r=t.$

  \end{itemize}
 In the cycles containing these separated partitions, the other partitions, if any, each have one defect.
\end{proof}

Suppose that the most defects among the partitions in a length $k$ cycle of Bulgarian solitaire is $d$. Then, the other partitions $\lambda$ of the cycle have either $d$ defects or $d-1$ defects in the case that $\vec{\Delta}(\lambda)$ has $\delta_0=\delta_1=0$ or $\delta_0=\delta_{k-1}=1$. This proof also makes it clear that if $r$ and $k-r$, the numbers of $1$'s and $0$'s in the excess vector, are not close, then every recurrent partition of $n$ has several defects.

\begin{theorem} \label{th:mindefects} Suppose that $n=\binom{k}{2}+r$ with $k,r\ge 1$ and let $u=|r-(k-r)|.$ Then, every recurrent partition of $n$ has at least $u-2$ defects. \end{theorem}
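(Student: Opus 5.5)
We argue directly from Theorem \ref{CyclesTheorem} and the defect criterion established in the proof of Theorem \ref{separated}. Let $\lambda$ be a recurrent partition of $n=\binom{k}{2}+r$ with excess vector $\vec{\Delta}=(\delta_0,\ldots,\delta_{k-1})$. Since $1\le r\le k$, every recurrent partition has this form. The parts are $i+\delta_i$, and for $0\le i\le k-2$ the two parts $i+\delta_i$ and $i+1+\delta_{i+1}$ differ by exactly $1$ precisely when $\delta_i=\delta_{i+1}$. The only exception is $i=0$ with $\delta_0=\delta_1=0$, where the first "part" is $0$. Moreover, each adjacent pair with $\delta_i=\delta_{i+1}$ yields a distinct defect value $j=i+\delta_i$: as $i$ increases, the values $i+\delta_i$ are nondecreasing, and they are strictly increasing along such pairs. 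So the number of defects is at least
\[
E(\vec{\Delta})-1,\qquad\text{where } E(\vec{\Delta})=\#\{0\le i\le k-2:\ \delta_i=\delta_{i+1}\}.
\]
This counts the equal adjacent pairs in the linear (not cyclic) word, minus at most one for the excluded pair at $i=0$.

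Next we bound $E$ by a counting argument on runs. Suppose the word $\delta_0\cdots\delta_{k-1}$ splits into $m$ maximal runs of equal symbols. Then $E=k-m$, since each run of length $\ell$ contributes $\ell-1$ equal pairs. Runs of $1$'s and runs of $0$'s alternate, so the numbers of each kind differ by at most one. Suppose without loss of generality that there are more $0$'s than $1$'s, so $k-r=r+u$. The number of $1$-runs is at most $r$, so the number of $0$-runs is at most $r+1$ and $m\le 2r+1$. (If $r=0$ this is degenerate, but here $r\ge1$.) Hence
\[
E=k-m\ \ge\ (2r+u)-(2r+1)=u-1.
\]
The symmetric case, with more $1$'s than $0$'s, gives the same bound $E\ge u-1$ by swapping the roles of the symbols.

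Combining the two steps, the number of defects is at least $E-1\ge u-2$, which proves the theorem. Note that the loss of $1$ in each step is genuinely needed: one from the extra run and one from the excluded pair $\delta_0=\delta_1=0$. This explains the bound $u-2$ rather than $u$.

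The only step needing care is the claim that distinct equal pairs give distinct defects. Two such pairs $i<i'$ give $j=i+\delta_i$ and $j'=i'+\delta_{i'}$. Since $\delta_i=\delta_{i+1}$, we have $j=i+1+\delta_{i+1}-1$, and therefore
\[
j'\ \ge\ i+1+\delta_{i+1}\ >\ j.
\]
It also remains to check the convention that $\delta_0=0$ means there is no $0$-part. This is consistent with Theorem \ref{CyclesTheorem}, and it is the reason for excluding that pair.
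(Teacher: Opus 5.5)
Your proof is correct and follows essentially the same route as the paper: both reduce defects to equal adjacent entries $\delta_i=\delta_{i+1}$ of the excess vector (discounting $\delta_0=\delta_1=0$) and then minimize the number of such entries given $r$ ones and $k-r$ zeros. Your run count $E=k-m$ with $m\le 2\min(r,k-r)+1$ supplies the justification that the paper leaves implicit when it simply names the minimizing vectors, while the paper's case split additionally records that the minimum is actually $u-1$ when the $1$'s outnumber the $0$'s.
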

\begin{proof}

To minimize defects, we must maximize the number of instances of $\delta_i \ne \delta_{i+1}$ other than, perhaps,  $\delta_0=\delta_1=0.$
If $1 \le r < k-r-1$,  then the partitions with the least defects
are those whose excess vector begins $0,0$ and never has $1,1$.
These have $k-2r-2=u-2$ defects.
If $r> k-r-1$,
then the partitions with the least defects are those
whose excess vector begins with $1$ and never has $0,0.$
These have $2r-k-1=u-1$ defects.\end{proof}

\section{Floridian solitaire} \label{SecFS}

 We now modify the rules of Bulgarian solitaire to arrive at a variant in which all partitions are required to be separated. We will investigate the cycles in this game.  It is again a one-player game, but, unlike Bulgarian solitaire, the player has some choices but may be left with no moves and hence lose. Again, the positions are partitions of a fixed integer $n$. Each turn consists of two moves: an $\alpha$-move, then an $\Omega$-move, each of which must finish at a separated partition. The $\alpha$-move could be described as the $\Omega$-move applied to a subset of the parts. Given a partition $\lambda_1,$  the player will:

\begin{enumerate}
    \item Select $s \ge 1$ parts; then
    reduce the selected parts by 1 and add a new $s$-part,
    creating a partition $\lambda_2$ such that
    \begin{itemize}
        \item $\lambda_2\ne \lambda_1$,
        \item $\lambda_2$ is separated, and
        \item $\Omega(\lambda_2)$ is separated.
    \end{itemize} The requirement $\lambda_2 \ne \lambda_1$ simply means that when the player selects only a single part, that part is not a $1$-part. \label{item:alphamove}
    \item Apply the $\Omega$-rule, resulting  in $\lambda_3=\Omega(\lambda_2)$, which must be separated. \label{item:omegamove}
\end{enumerate}

 A particular partition $\lambda$ is called an $\alpha$-\textbf{position} when it occurs at the start of a turn and an $\Omega$-\textbf{position} when it occurs in the middle of a turn. In a few places, we write $[\lambda,\alpha]$  in the first case and $[\lambda,\Omega]$  in the second case.

In the spirit of Austrian solitaire, introduced in \cite{AkinDavis1985}, Carolina solitaire, introduced in \cite{GriggsHo1998}, and Montreal solitaire, introduced in \cite{Cannings1992}, we refer to this new game as \textit {Floridian solitaire}
because the first and third authors
reside in Florida.

 \begin{lemma} \label{PreSeparatedlemma} Suppose that $\lambda_2$ is a separated partition with $p$ parts and that $\Omega(\lambda_2)=\lambda_3.$ Then, $\lambda_3$ is separated exactly if  $\lambda_2$ has no parts of size $p$ or $p+2$. \end{lemma}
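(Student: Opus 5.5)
I will compute the part sizes of $\lambda_3=\Omega(\lambda_2)$ directly and check, case by case, which pairs of sizes can differ by exactly $1$. Write the distinct part sizes of $\lambda_2$ as $a_1<a_2<\cdots<a_m$. Since $\lambda_2$ is separated, $a_{i+1}-a_i\ge 2$ for all $i$. The $\Omega$-move reduces every part by $1$, discarding parts that become $0$, and adds one new part of size $p$. So the multiset of positive part sizes of $\lambda_3$ consists of the values $a_i-1$ that are positive, together with $p$.

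The first step is to handle the reduced parts among themselves. For $a_i\ge 2$, distinct reduced sizes $a_i-1$ and $a_j-1$ differ by $|a_i-a_j|\ge 2$. Reducing also cannot create two reduced sizes differing by $1$. Any former $1$-parts simply disappear, which removes sizes and cannot create a defect. Hence the reduced parts alone contain no defect. Every possible defect of $\lambda_3$ must therefore involve the new part $p$, paired with a reduced size $a_i-1$ satisfying $|a_i-1-p|=1$, that is, $a_i=p$ or $a_i=p+2$.

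The second step checks that such an $a_i$ really yields a defect. We need the reduced part to be positive, and we need the two sizes to be distinct, which holds automatically since they differ by $1$.
\begin{itemize}
\item If $a_i=p+2$, the reduced part has size $p+1\ge 2$, so the defect $p,p+1$ appears.
\item If $a_i=p$, the reduced part has size $p-1$, which is positive provided $p\ge 2$, so the defect $p-1,p$ appears.
\end{itemize}
Conversely, if $\lambda_2$ has no part of size $p$ or $p+2$, no reduced size lies at distance $1$ from $p$. Combined with the first step, this shows $\lambda_3$ is separated.

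The only delicate point is the boundary case $a_i=p=1$. Then $\lambda_2$ has a single part, and that part has size $n=1$, so this case only arises for the trivial partition of $n=1$. For the games under consideration $n>1$, so a $p=1$ partition has its one part of size $n\ge 2$ and the case $a_i=p$ forces $p\ge 2$. I would note this explicitly, or simply observe that a one-part $\lambda_2$ with part $p$ means $n=1$, which is excluded.
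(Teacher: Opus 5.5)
Your proof is correct and follows the same route as the paper. Reducing every part of a separated partition leaves a separated partition, so any defect of $\lambda_3$ must pair the new $p$-part with a reduced part of size $p\pm 1$, that is, with an original part of size $p$ or $p+2$. Your extra check of the boundary case $p=1$ with a $1$-part is a detail the paper's one-line proof glosses over. That case occurs only for $n=1$, where the lemma would in fact fail since $\Omega(1)=1$, and it is excluded by the paper's standing assumption $n>1$.
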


\begin{proof} When each part of $\lambda_2$ is reduced by $1$, the result is a partition of $n-p$ that is separated. Then, the new part of size $p$ creates $\lambda_3.$ So, $\lambda_3$ fails to be separated exactly if it has a $(p-1)$-part or $(p+1)$-part, which happens exactly if $\lambda_2$ has a $p$-part or $(p+2)$-part.\end{proof}

\begin{definition} \label{Note_G_n}
 Let  $S(n)$ denote the set of all separated partitions of $n$. Also, let $T(n)$ be the set of separated partitions of $n$ with the additional property of Lemma \ref{PreSeparatedlemma}:  no part has size $p$ or $p+2$ where $p$ is the number of parts.

 Let $G_n$ be the bipartite  digraph with  node set

 \begin{equation*}
     V(G_n) =V_{\alpha} \cup V_{\Omega},
 \end{equation*}
 where $V_{\alpha} = \{[\lambda,\alpha]: \lambda \in S(n)\}$
 and   $V_{\Omega} = \{[\lambda,\Omega]: \lambda \in T(n)\}$,
 and with arcs
 \begin{itemize}
    \item  $[\lambda_1, \alpha] \rightarrow[\lambda_2,\Omega]$
    provided that there is an $\alpha$-move from $\lambda_1$
    to $\lambda_2 \in T(n)$ and
    \item  $[\lambda_2, \Omega] \rightarrow [\Omega(\lambda_2),\alpha].$
 \end{itemize}

 Each node $[\lambda_1,\alpha]$ may belong to zero, one, or several arcs $[\lambda_1,\alpha] \rightarrow [\lambda_2,\Omega]$. However, each node $[\lambda_2,\Omega]$ belongs to  one arc $[\lambda_2,\Omega] \rightarrow [\Omega(\lambda_2),\alpha].$ Note that a successful turn $[\lambda_1, \alpha] \rightarrow[\lambda_2,\Omega] \rightarrow [\lambda_3,\alpha]$ requires both $\lambda_2$ and $\lambda_3$ to be separated. From  Lemma \ref{PreSeparatedlemma}, this happens exactly when $\lambda_2 \in T(n).$

As before, $\lambda \rightarrow \Omega(\lambda)$ is the $\Omega$-move from $\lambda.$ This is the second move of a turn in Floridian solitaire. The first move of a turn is an $\alpha$-move. An equivalent definition of an $\alpha$-move from $\lambda_1$ is that some parts are chosen, and the rest are removed, creating a partition $\lambda^{*}_1$ of some $2 \le n'\le n.$ The $\Omega$-move is applied, producing $\lambda^{*}_2=\Omega(\lambda^{*}_1)$, and then the removed parts are restored yielding $\lambda_2.$ There is the additional requirement that $\lambda_2 \in T(n).$  Set $\alpha(\lambda_1)=\{\lambda_2 : \text{ there is an }\alpha\text{-move from }  \lambda_1 \text{ to  }\lambda_2 \in T(n)\}$ and  $\alpha(S)=\bigcup_{\lambda \in S}\alpha(\lambda).$

If the player cannot complete a turn, they lose. A {\bf winning position} is a $\lambda$ such that, from $[\lambda,\alpha],$  the player can avoid losing with proper play. A node $[\lambda,\alpha]$ is a {\bf recurrent state} if it belongs to a cycle in $G_n.$ Then, we also call $\lambda$ a recurrent state.
 So, $[\lambda,\alpha]$ is a winning position  if there is a path from it to a recurrent state. An {\bf immediate loss} is an $\alpha$-state from which there are no legal moves. So, $\lambda$ fails to be a winning position exactly if every path from $[\lambda,\alpha]$ in  $G_n$ ends at an immediate loss.

 We also define a related directed graph $H_n$. The nodes are the partitions in $S(n)$  and there is an arc $\lambda_1 \rightarrow \lambda_3$ exactly if there is a $2$-arc path $[\lambda_1,\alpha] \rightarrow [\lambda_2,\Omega] \rightarrow [\lambda_3,\alpha]$  in $G_n.$ Each cycle in $G_n$ has some even length $2k$ and  corresponds to a cycle of length $k$ in $H_n$. The recurrent states are the same in both graphs. We call the subgraph of $H_n$ induced by the recurrent states, the  {\bf core}. To find the core, we can restrict to the subgraph induced by $\Omega(T(n))=\{\Omega(\lambda),\lambda \in T(n)\}$.
\end{definition}

\begin{remark}
    It turns out that for $n=2$ and $n=6$, every state is a winning state, so there are no immediate losses. It seems certain that these are the only cases. It is easy to show that there are immediate losses for $n=2m+1$ and for $n=3m+1$. For $n=3$ and $n=4$, $3 $ and $2^2$ are immediate losses. The partition $1\,m^2$ of $n=2m+1>3$ is an immediate loss because the only $\alpha$-moves are to $1\,2\,(m-1)^2$, which is not separated, and to $3\,(m-1)^2$, which is separated
    (for $n\ne 7,11$),
    but not in $T(n)$ because it has $p=3$ parts and a $p$-part. More directly, $$\Omega(3\,(m-1)^2)=2\,3\,(m-2)^2.$$ Similarly, the partition $ 1\,3^m$ of $n=3m+1>4$ is an immediate loss because the only $\alpha$-moves are to $ 1\,2^m\,m,$ which is not separated, and to $2^m(m+1)$, which is not in $T(n).$
\end{remark}

A particular $\alpha$-move from $\lambda_1$ may be specified by indicating the parts that change. We next give a compact notation for doing this.
\begin{notation} \label{note_alphamove}
During an $\alpha$-move, when an $m$-part for $m>1$ is reduced, every \mbox{$m$-part} must be reduced. Otherwise, the resulting position will have two parts whose sizes differ by one. If there are several $1$-parts, it is possible to reduce (i.e. remove) only some of them and leave the rest.
   To specify an $\alpha$-move, we will underline the part sizes that are reduced
    by the specified $\alpha$-move.
    In the case of $1$-parts,
    we will  underline the $1$ and use a subscript  to
    indicate the number of $1$-parts removed.
\end{notation}

The result of an $\alpha$-move from $\lambda_1$ is $\lambda_2=\alpha(\lambda_1)$.  Given the corresponding $\alpha$-arc  $\lambda_1 \rightarrow \lambda_2$, it is not strictly necessary  to indicate the parts that  changed; they are apparent from carefully comparing $\lambda_1$ to $\lambda_2$. For clarity, we usually do indicate them with underlines and, in the following examples, also describe them.

\begin{figure}[h]
    \centering
    \includegraphics[width=1\linewidth]{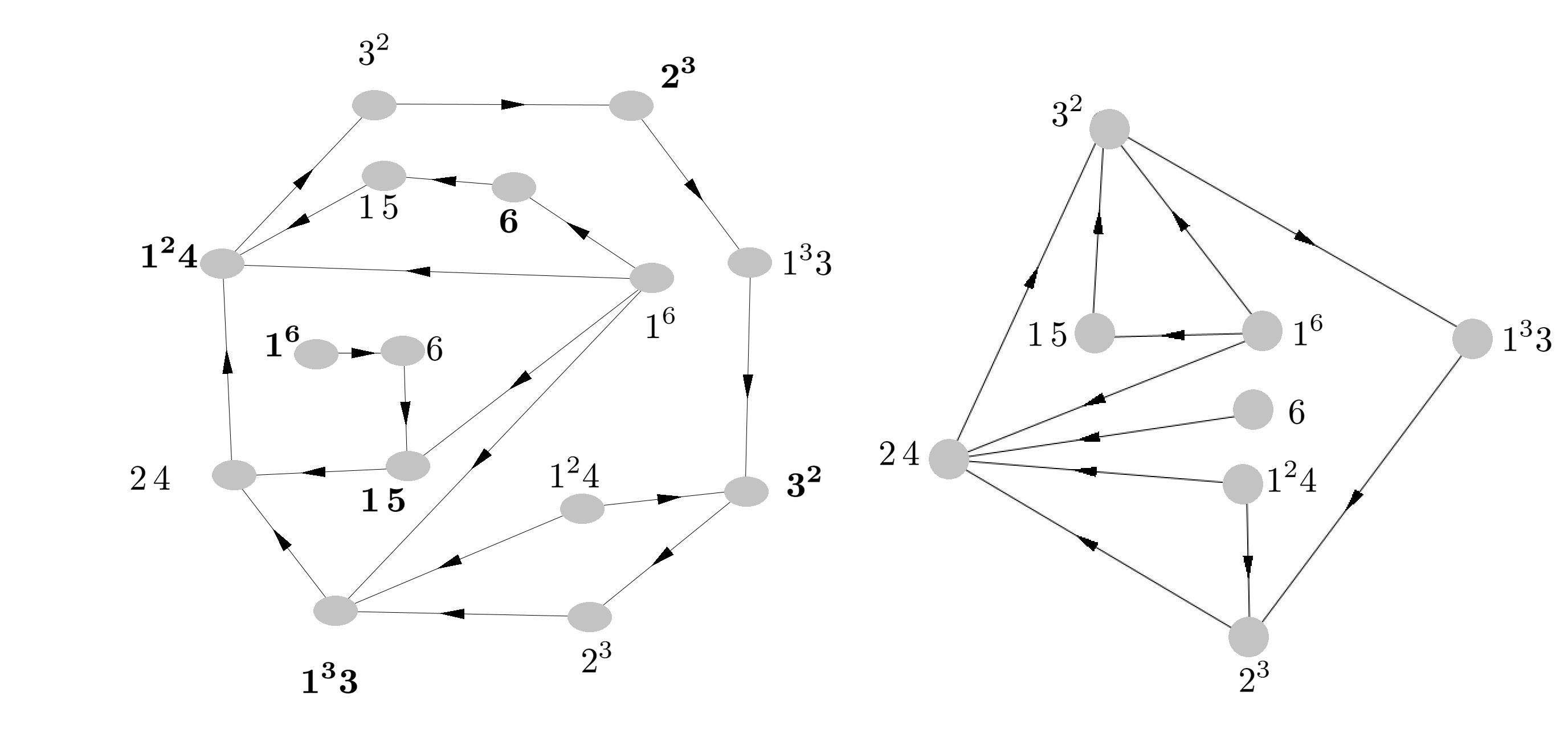}
    \caption{The digraphs $G_6$ and $H_6$.}
    \label{fig:G6andH6}
\end{figure}

\begin{example} \label{G6H6}  \normalfont $n=6$ has  eleven partitions, of which eight are in $S(6)$. Of these, $2\,4 \notin T(6),$  but the other seven are in $T(6)$.   The graphs $G_6$ and $H_6$ are shown in  Figure \ref{fig:G6andH6}. The seven $\Omega$-positions are shown in bold.

For four of the $\alpha$-positions, the only  $\alpha$-move is the $\Omega$-move. The exceptions are \begin{itemize}
    \item $2\,4$, with the move $\underline{2}\,4 \rightarrow \mathbf{1^24}$,
    \item $1^24$,  with the moves $1^2\underline{4} \rightarrow \mathbf{1^33}$ and the $\Omega$-move $\underline{1}^2\underline{4} \rightarrow {\mathbf 3^2}$,
    \item $1^33$, with the move $\underline{1}_3^33\rightarrow {\mathbf 3^2},$ and
    \item $1^6$, from which one can take $3,4,5,$ or all $6$ of the parts, resulting in\newline  $\mathbf{1^33,1^24,1\,5},$ and ${\mathbf 6}$ respectively.
\end{itemize}

The nodes of $H_6$ have the same out-degree as the corresponding nodes of $G_6$ except that $1^6$ has only three arcs leaving it in $H_6$, because the paths $1^6 \rightarrow \mathbf{1\,5} \rightarrow 2\,4$ and  $1^6 \rightarrow \mathbf{1^33} \rightarrow 2\,4$ have the same ends.

The only cycle of $G_6$ is the $8$-cycle

$$\underline{2}\,4\rightarrow \mathbf{1^24} \rightarrow \underline{3}^2 \rightarrow \mathbf{2^3} \rightarrow \underline{1}^3_33 \rightarrow \mathbf{3^2} \rightarrow \underline{2}^3 \rightarrow \mathbf{1^33} \rightarrow\ .$$
So the core of $H_6$ is the $4$-cycle $$2\,4 \rightarrow 3^2 \rightarrow 1^33 \rightarrow 2^3 \rightarrow\ .$$

 \end{example}

We could  discard $\Omega$-positions in $T(n)$ that cannot be reached by an $\alpha$-move, for example $1^6$ for $n=6$. That is, we could use $\alpha(S(n))$ instead of $T(n)$.  Any member of $S(n)$ could be an initial position. In later turns, every $\alpha$-position is in $\Omega(T(n)).$ Our main interest is the recurrent states and cycles. For this purpose, we could restrict to the subgraph of $G_n$ induced by $V_{\Omega} \cup \{[\Omega(\lambda),\alpha] :\lambda \in T(n)\}.$   We illustrate this for $n=24$ and $n=25$.

\begin{example} \label{cts25}  \normalfont Of the  $1575$ partitions of $n=24$,  $|S(24)|=424$ are separated and  $|T(24)|=326$ of these are such that the $\Omega$-move takes them to $S(24)$.  The set  $\Omega(T(24))$ has $189$ members. Any of the $424$ separated positions could be an initial position for the game. If play continues, the player will only face one of these $189$ positions.  The core of $H_{24}$ is strongly connected with $35$ nodes and $60$ arcs.

Of the  $1958$ partitions of $n=25$, $|S(25)|=472$ are separated and $|T(25)|=315$ of these are such that the $\Omega$-move takes them to $S(25)$. The set  $\Omega(T(25))$ has $172$ members. Any of the $472$ separated partitions could be an initial position for the game. If play continues, the player will only face one of those $172$ positions. The core of $H_{25}$ has $16$ nodes and $19$ arcs. This core is shown in  Figure \ref{fig:11212325}.
 \end{example}

 Each $2$-cycle of Bulgarian solitaire appears as a $2$-cycle of Floridian solitaire in two ways, for the same $n.$
 We now show that these are the only $2$-cycles.

  \begin{theorem} \label{FS2cycles} The graph $G_n$ has $2$-cycles exactly if $n=2t^2$. Then, there are just two $2$-cycles,  $[\lambda_1,\alpha] \leftrightarrow [\lambda_2,\Omega]$ and $[\lambda_2,\alpha] \leftrightarrow [\lambda_1,\Omega]$, where  $$
        \lambda_1=1^2 3^2 \ldots (2t-1)^2 \leftrightarrow
        \lambda_2 =2^2 4^2 \ldots (2t-2)^2 (2t)
  $$ is the $2$-cycle of Bulgarian solitaire for that $n$.  The graph $H_n$ has loops exactly if $n=2t^2$. Then, there are just the loops on $\lambda_1$ and $\lambda_2.$ \end{theorem}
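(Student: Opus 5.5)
The plan is to show that any $2$-cycle of $G_n$ (equivalently, any loop of $H_n$) forces the $\alpha$-move to be the full $\Omega$-move. Then $\lambda_1$ is a Bulgarian $2$-cycle or fixed point, and Theorem \ref{124cycles} finishes the argument. A $2$-cycle in the bipartite graph $G_n$ has the form $[\lambda_1,\alpha]\rightarrow[\lambda_2,\Omega]\rightarrow[\lambda_1,\alpha]$. So $\lambda_2\in\alpha(\lambda_1)$ and $\Omega(\lambda_2)=\lambda_1$, and this is exactly a loop at $\lambda_1$ in $H_n$. The statements about $G_n$ and $H_n$ are therefore equivalent, and I work with the pair $(\lambda_1,\lambda_2)$.

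\emph{Converse direction.} For $n=2t^2$, take $\lambda_1=1^23^2\ldots(2t-1)^2$ and $\lambda_2=2^24^2\ldots(2t-2)^2(2t)$. Selecting all parts of $\lambda_1$ is a legal $\alpha$-move giving $\lambda_2$, and vice versa. By Theorem \ref{separated} both partitions are separated, and each is $\Omega$ of the other. Hence both lie in $T(n)$ by Lemma \ref{PreSeparatedlemma}. This gives the two $2$-cycles $[\lambda_1,\alpha]\leftrightarrow[\lambda_2,\Omega]$ and $[\lambda_2,\alpha]\leftrightarrow[\lambda_1,\Omega]$, and the loops at $\lambda_1$ and $\lambda_2$ in $H_n$.

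\emph{Forward direction.} Write $\lambda_1=U\cup S$, where $S$ is the multiset of $s$ selected parts and $U$ the unselected ones, and let $q$ be the number of parts of $\lambda_2$. Unwinding the two moves gives the multiset identity
\[
\lambda_1 \;=\; (U-1)\cup(S-2)\cup\{s-1,\;q\},
\]
with nonpositive entries discarded. Here $U-1$ means every part of $U$ reduced by $1$, and $S-2$ means every part of $S$ reduced by $2$. The goal is to show $U=\emptyset$. I would compare the two sides from the top down, peeling off the largest part $M$ of $\lambda_1$. Every part of $(U-1)\cup(S-2)$ is at most $M-1$, so each copy of $M$ on the left must be matched by $q$ or $s-1$. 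This caps the multiplicity of each value at $2$, because the only extra sources of parts are the two new values $q$ and $s-1$. Continuing downward, the same matching shows that the parts of $\lambda_1$ form a staircase-like chain in which each value is obtained from a larger one by subtracting $1$ or $2$.

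Counting shows this chain can close up only in a rigid way. I would track the total number of parts together with the sum. The reductions in $U$ lower the sum by $|U|$, those in $S$ by $2s$, and $q+s-1$ is added back. Together with the count of parts that drop to zero, this should force $|U|=0$ and the Bulgarian relation $q=$ the number of parts of $\lambda_1$. Once $U=\emptyset$, the $\alpha$-move is $\Omega$, so $\Omega(\Omega(\lambda_1))=\lambda_1$ in Bulgarian solitaire. By Theorem \ref{124cycles}, $\lambda_1$ is then either a fixed point (excluded, since $1\,2\,\ldots\,t$ has defects and is not separated, and $\lambda_2\neq\lambda_1$ is required) or in the unique $2$-cycle, which occurs only at $n=2t^2$.

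\emph{Main obstacle.} I expect the main difficulty to be the step $U=\emptyset$, that is, ruling out a genuinely partial $\alpha$-move that returns to $\lambda_1$ after $\Omega$. My first attempt would be the top-down matching just described, made rigorous by induction on the distinct part sizes of $\lambda_1$, using separation: a value $M$ and the value $M-1$ cannot both occur. If that becomes messy, my fallback is a Bulgarian-solitaire energy potential (the sum over cells of the Young diagram of their diagonal index). I would check that $\Omega$ never increases it and that applying $\Omega$ to a proper sub-multiset strictly changes it in a way that one subsequent $\Omega$ cannot undo.
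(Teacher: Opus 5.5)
Your converse direction is fine, and so is the final step ($U=\emptyset$ means the $\alpha$-move is the $\Omega$-move, then apply Theorem \ref{124cycles}). The forward direction has a genuine gap, exactly at the step you flag: $U=\emptyset$ is never proved, and the tools you propose cannot prove it. Your inputs are the multiset identity $\lambda_1=(U-1)\cup(S-2)\cup\{s-1,q\}$, separation of $\lambda_1$, and counts of parts and of the sum. The sum constraint is automatic, since both moves preserve $n$. All of these are satisfied by $\lambda_1=1\,3$ with $S=\{3\}$, $U=\{1\}$, $s=1$. Then $\lambda_2=1^2 2$, $q=3$, and $\Omega(\lambda_2)=1\,3=\lambda_1$, although $U\neq\emptyset$. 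The same example refutes your fallback claim that a proper partial $\Omega$-move can never be undone by one subsequent $\Omega$-move. The only thing that excludes it is that $\lambda_2=1^2 2$ is not separated. Your sketch never uses separation of $\lambda_2$; the induction is on part sizes of $\lambda_1$, and the cap of multiplicities at $2$ is justified only for the largest part. So it cannot close.

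The missing idea is a short local argument that uses both separation hypotheses.
\begin{itemize}
\item If some unselected part $u\ge 2$ lies in $U$, it survives into $\lambda_2$. Then $\Omega(\lambda_2)=\lambda_1$ contains $u-1$ as well as $u$, contradicting separation of $\lambda_1$.
\item If an unselected $1$-part lies in $U$, then $\lambda_2$ has at least two parts (that $1$ and the new $s$-part), so $q\ge 2$. Hence the $1$-part of $\lambda_1=\Omega(\lambda_2)$ must come from a $2$-part of $\lambda_2$. Then $\lambda_2$ contains both $1$ and $2$, contradicting separation of $\lambda_2$.
\end{itemize}
Hence $U=\emptyset$. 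This is essentially the paper's proof. For each part size $m>1$ of $\lambda_1$, it shows $\lambda_2$ has no $m$-part. It then uses $\lambda_2\neq(n)$ to force a $2$-part in $\lambda_2$, so every $1$-part was removed. No top-down induction, counting, or potential function is needed.
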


\begin{proof}
 The result for $H_n$ follows from that for $G_n.$ Let $[\lambda_1,\alpha] \leftrightarrow [\lambda_2,\Omega] $ be a $2$-cycle in $G_n$. Then, both partitions are separated and $\Omega(\lambda_2)=\lambda_1$. If we show that the $\alpha$-move from $\lambda_1$ to $\lambda_2$ reduces every part, then it is the $\Omega$-move and the result follows from Theorem \ref{124cycles}.

 The partitions of $n=2$, namely $2$ and $1^2,$ each allow only the $\Omega$-move. So, assume that $n>2.$  Then, $\lambda_2\ne n$, since that would force $\lambda_1=\Omega(n)= 1\,(n-1)$ from which there is no $\alpha$-move to $n$. We wish to show that the $\alpha$-move reduces every $m$-part of $\lambda_1$. Consider first the case  $m>1$. Since there is an $m$-part in $\lambda_1,$  there are no $(m-1)$-parts in $\lambda_1=\Omega(\lambda_2)$, which means that there are no $m$-parts in $\lambda_2.$ Hence, the $\alpha$-move reduced every $m$-part of $\lambda_1$. Now consider the case of $1$-parts in $\lambda_1$. As $\lambda_2 \ne n$, the new part is not a $1$-part. So, there must be a $2$-part in $\lambda_2$ and hence no $1$-parts. Thus, all the $1$-parts of $\lambda_1$ were also removed in the $\alpha$-move.
 \end{proof}

\section{Observations about Floridian solitaire for
$n \le 100$}\label{Sec_Computations}

We describe the results of computations of $G_n$ and $H_n$ where $n\le 100.$ For small $n$, we computed the entire graph $G_n$. For moderate $n$, we restricted the computation to the portion of $H_n$ that could include any cycles. For large $n,$  we considered only the portion of $H_n$ that could contain $4$-cycles.  We do not rely on these claims in the rest of the paper, but they do explain some of the motivation.

In cycles of Bulgarian solitaire,  the new part created by an $\Omega$-move is always the largest of the resulting partition. This is not always the case in Floridian solitaire, but it seems to happen more often than not.  For $n=2t^2$, each of the two loops is an isolated component of $H_n.$ It seems highly likely that the core of $H_n$ is not only strongly connected for all $n>33$ (with the exception of the loops) but that the number of nodes and the ratio of arcs to nodes increase rapidly (but not monotonically) with $n$. The cores of $H_{18}$ and $H_{33}$ each have a small component and a large component. For larger $n \le 60$, the core is strongly connected, aside from loops. The number of losing positions seems to grow rapidly, but not nearly as rapidly as the number of separated partitions. The chances of being able to reach the strongly connected components from a random separated initial position appear to be high and increasing after $n=30$, although more slowly for the odd case than the even case.

The case $n=1$ is ruled out. There are  cycles for $n=2,6,8,11,14,16,18,21$, but none for the other $n<23$. For $23 \le n \le 100$, there are cycles. Together with this claim, Theorem \ref{thm4cyclesInGn}
establishes that there are cycles in $G_n$ for all $n \ge 23.$ It seems that the core of $H_n$, the subgraph induced by the recurrent states, becomes large and highly connected as $n$ increases. However, this happens more slowly in the odd case than in the even case.

We first consider the case where $n$ is even. For $n=4,10$, and $12$, there are no recurrent states. From Theorem \ref{FS2cycles}, $G_2$  has two $2$-cycles and $H_2$ has loops on $1^2$ and $2$. Those are the entire graphs, so every position is a winning position.

As we saw in Example \ref{G6H6},  for $n=6$, there are again no losing positions, although this almost certainly never happens again. There are $4$ recurrent states and they belong to an $8$-cycle in $G_6$. From Theorem \ref{FS2cycles}, $G_8$  has two $2$-cycles. These are the only cycles, so $H_8$ is just the two loops on $1^23^2$ and $2^24$. There are no other recurrent states. In contrast, Bulgarian solitaire for $n=8$ also has a $4$-cycle that includes $ 1\,3\,4$ with excess vector $(0,0,1,1).$ Of the $22$ partitions of $n=8$, thirteen are in $S(8).$  Neither $4^2$ nor $2\,6$ are in $T(8),$ but the other eleven separated partitions are, and they result in eight $\alpha$-positions that can occur after the initial turn. The partition $4^2$ is an immediate loss, but the other twelve separated partitions are winning positions.

The cores of $H_6,H_{14}$, and $H_{16}$ are shown in Figure \ref{fig:61416}.

 \begin{figure}[h]
    \centering
    \includegraphics[width=1\linewidth]{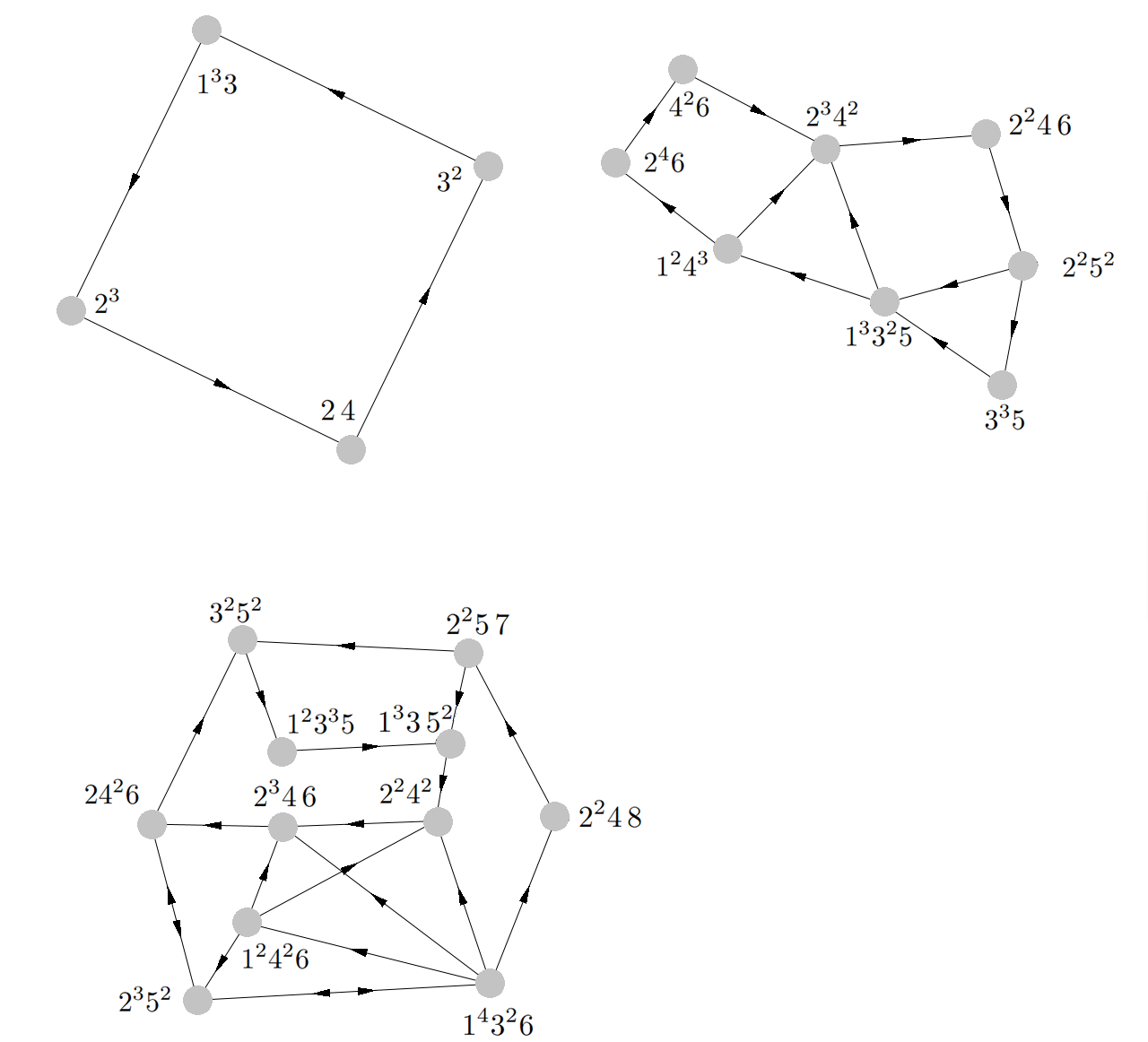}
    \caption{The cores of $H_6$, $H_{14},\text{ and }H_{16}$.}
    \label{fig:61416}
\end{figure}

The shortest cycles in $H_6$ and $H_{14}$ are $4$-cycles corresponding to $8$-cycles in $G_6$ and $G_{14}$. In $H_{16},$ the arc pairs $2^35^2 \leftrightarrow 1^43^26$ and $2^35^2 \leftrightarrow 2\,4^26$  correspond to a pair of $4$-cycles in $G_{16}$, namely

$$2^3\underline{5}^2 \rightarrow \mathbf{2^44^2} \rightarrow \underline{1}^4_33^26 \rightarrow  \mathbf{1\,3^36} \rightarrow$$  and
$$\underline{2}^35^2 \rightarrow \mathbf{1^33\,5^2} \rightarrow \underline{2}\,\underline{4}^26 \rightarrow  \mathbf{1\,3^36} \rightarrow.$$

From Theorem \ref{FS2cycles}, $G_{18}$  has two $2$-cycles and $H_{18}$ has two loops. The core of $H_{18}$ has four connected components. These are shown in Figure \ref{fig:18}. There are no recurrent states for $n=20$ and $n=22.$ As mentioned in Example \ref{cts25},  the core of $H_{24}$ is strongly connected with $35$ nodes and $60$ arcs.

 \begin{figure}[h]
     \centering
     \includegraphics[width=1\linewidth]{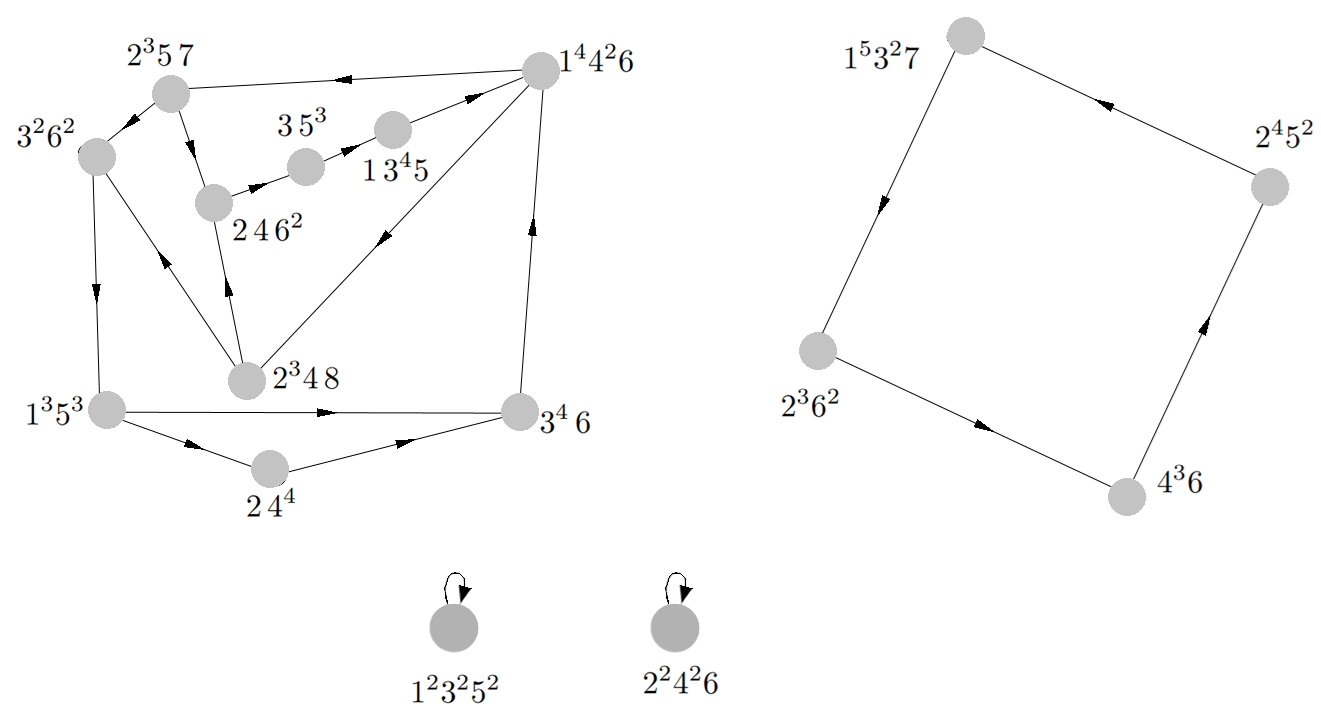}
     \caption{The core of $H_{18}$. }
     \label{fig:18}
 \end{figure}

 We now consider the case where $n$ is odd. $G_n$ and $H_n$ seem to be smaller for odd $n$ than for comparable even $n.$ Compare the cases of $n=24$ and $n=25$ described in Example \ref{cts25}. The cores of $H_n$ for $n=11,21,23,$ and $25$ are shown in Figure \ref{fig:11212325}. Those for $n=27,29,$ and $31$ are shown in Figure \ref{fig:27293133}, which also shows the smaller of the two components of the core of $H_{33}$ with four nodes and five arcs. The larger component has $53$  nodes and $91$ arcs. \newpage

   \begin{figure}[h]
     \centering
     \includegraphics[width=1\linewidth]{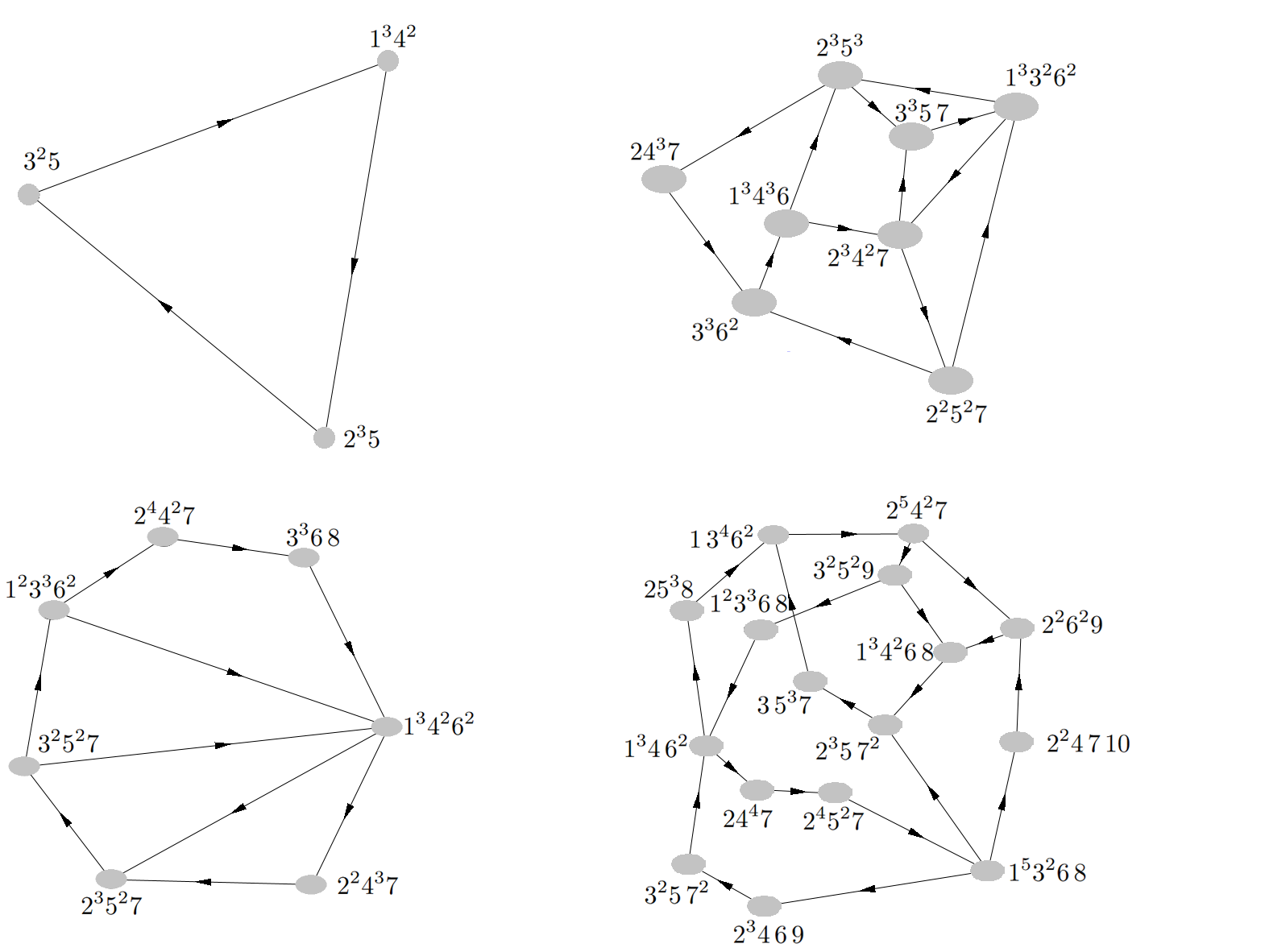}
     \caption{The cores of $H_n$ for $n=11,\, 21, \, 23$, and $25$.}
     \label{fig:11212325}
 \end{figure}

 \begin{figure}[h]
          \centering
          \includegraphics[width=1\linewidth]{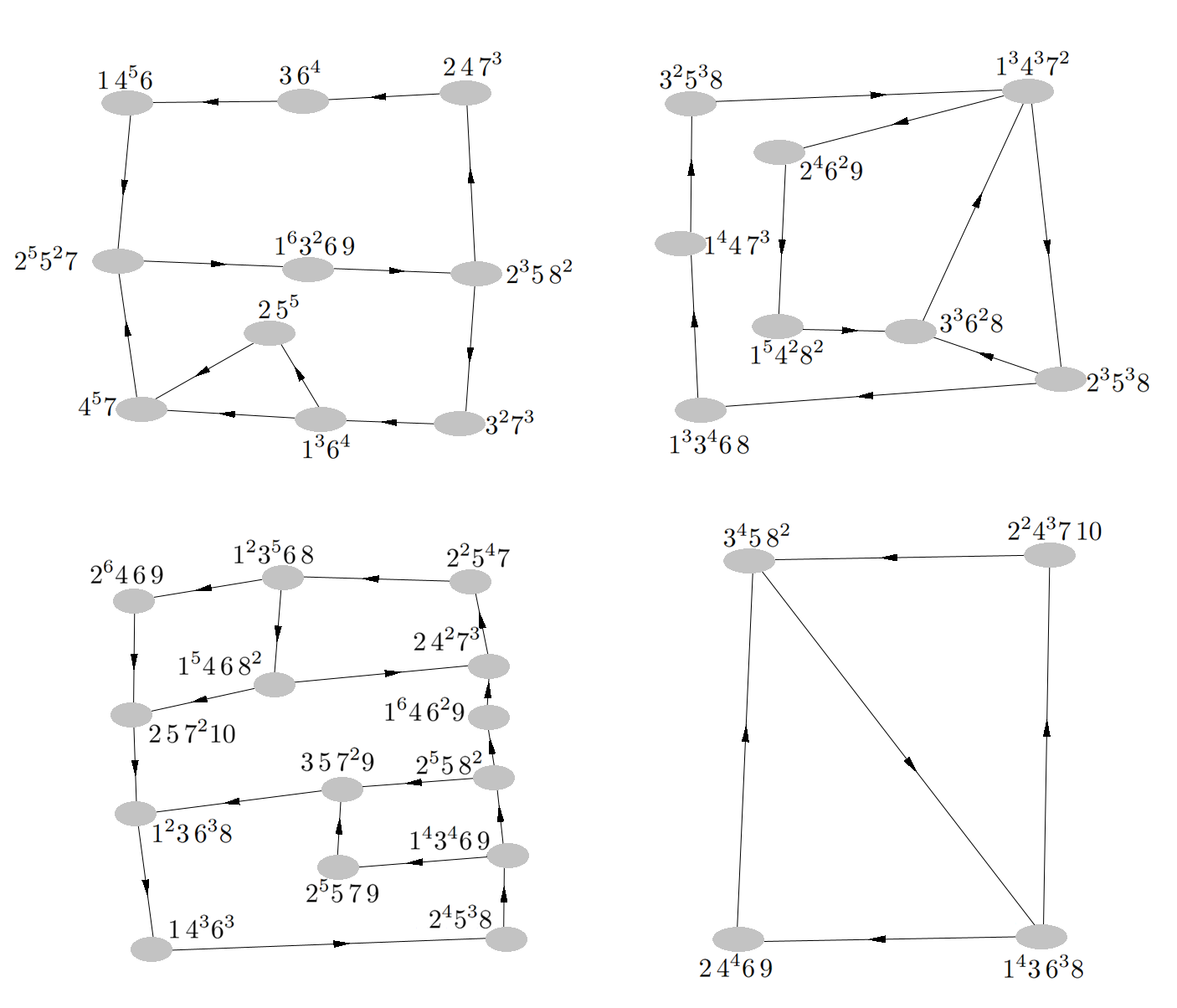}
         \caption{The cores of $H_n$ for
        $n=27,\,29,\,31$, and the smaller component of $H_{33}$.}
       \label{fig:27293133}
  \end{figure}

\section{ The 4-cycles in Floridian solitaire} \label{Sec_cyc4}

To prove, without appeal to computations, that in Floridian solitaire there are always cycles for large enough $n$, we turn to $4$-cycles.
In Bulgarian solitaire, there are few $4$-cycles.
From Theorem \ref{124cycles}, this happens only for $n=8t^2-t$,
$n=8t^2,$ and $n=8t^2+t$ with $t\ge 1.$
These $n$ have one $4$-cycle each.
So, there are three one-parameter families that together account for all the $4$-cycles of Bulgarian solitaire.

 \begin{remark} \label{re:no4cycs} Our computations show that, for $0\le  n \le 100$, $G_n$ has a $4$-cycle with the $35$ exceptions. These are the values $n\in \{0,1,\ldots,15\}$, along with the $19$ further exceptions of $n \in \{ 18, 20, 22, 36\}$ and $n \in \{ 17, 19, 21, 23, 25, 27, 29, 31, 33, 39, 41, 43,$ $49, 51, 73\}$.  We include the trivial cases of $n\in \{0,1\}$ and list these exceptions separately for the even and odd cases with $n>16$. This will be convenient in Section \ref{Sec_Par4} where we establish that there are $4$-cycles in  $G_n$ for all other $n.$
\end{remark} \newpage

In Floridian solitaire, $G_n$ has many $4$-cycles for large enough $n$. Our main result, Theorem \ref{thm4cyclesInGn}, is that there is a $4$-cycle in $G_n$ for all $n>73.$ With a little bookkeeping we strengthen this in Theorem \ref{thm4cyclesInGn_more} to show that there are $4$-cycles for $n\ge 16$ aside from $19$ values, the largest of which is $73.$    In this section, we provide some notation and examples in preparation for that result, including a three-parameter family  of $4$-cycles $Q(x,y,z)$. In Section \ref{Sec_Par4}, we show that for most $n$, $G_n$ has a cycle from this family. The exceptions are $8$  values of $n$ close to each half-square $2t^2$ or $2t^2+2t$, or fewer for small $t$. We provide $16$ one-parameter families that handle these exceptions.

Recall that a $4$-cycle in $G_n$ represents two turns for the player, each with an \mbox{$\alpha$-move} and an $\Omega$-move.
In the graph $H_n$, this corresponds to a $2$-cycle.

\begin{example} \label{simple} \normalfont

We provide $4$-cycles in $G_n$ for $n=16,30,$ and $48$ that are particularly simple and illustrate the relationship between them and
the corresponding $2$-cycles in $H_n$.
We indicate the $\Omega$-positions in bold and the $\alpha$-positions by underlines using  Notation \ref{note_alphamove}. These 4-cycles are

\begin{align*}
 &2^3\underline{5}^2\phantom{7^29^2}  \rightarrow&  &\mathbf{2^44^2}\phantom{7^29^2} \rightarrow&   &\underline{1}^4_33^26\phantom{8^210} \rightarrow&  & \mathbf{1\,3^36}\phantom{8^210} \rightarrow\ , \\
 &2^3\underline{5}^27^2\phantom{9^2}  \rightarrow&  &\mathbf{2^44^27^2}\phantom{9^2} \rightarrow&  &\underline{1}^4_33^26^28\phantom{10} \rightarrow&  & \mathbf{1\,3^36^28}\phantom{10} \rightarrow\ ,
 \text{ \ \ and } \\
  &2^3\underline{5}^27^29^2  \rightarrow&
  &\mathbf{2^44^27^29^2} \rightarrow&   &\underline{1}^4_33^26^28^210 \rightarrow&
  & \mathbf{1\,3^36^28^210} \rightarrow\ .
\end{align*}

We will call these $q_1,q_2$, and $q_3.$ The pattern turns out to continue and to give a $4$-cycle $q_z$  in $G_n$ for $n=2(z+2)^2-2.$
Here,  $\underline{5}^2$ indicates that both of the $5$-parts are reduced replacing $5^2$ with $4^2$. Since this is all that was done, this results in a fourth $2$-part. And $\underline{1}^4_3$ indicates that $3$ of the $1$-parts are removed, this results in a third $3$-part.

These 4-cycles in $G_n$ for $n=16,30,$ and $48$ become
2-cycles in $H_n$ for $n=16,30,$ and $48$.
These 2-cycles are

\begin{alignat*}{4}
 && 2^35^2\phantom{7^29^2} \quad && \leftrightarrow  \quad  && 1^43^26\ , \phantom{8^210,}
 \phantom{\text{ and }} \\
 && 2^35^27^2\phantom{9^2} \quad && \leftrightarrow \quad  && 1^43^26^28\ ,\phantom{10,} \text{ and }  \\
 && 2^35^27^29^2  \quad && \leftrightarrow \quad   && 1^43^26^28^210\ . \phantom{\text{ and }}
\end{alignat*}

\end{example}

\begin{example} \label{ex1_0_11} \normalfont

The cycle $q_{11}$  is given below. We no longer display the partitions for the $\Omega$-positions in bold since they are the partitions without underlines as well as the partitions where all parts change. This 4-cycle in $G_{336}$ is as follows, where the overbraces indicate the number of part sizes.

\begin{align*}
2^3, (\overbrace{\underline{5},7,9,\ldots,25}^{11})^2 \rightarrow\phantom{.} \\
2^4,4^2,(\overbrace{7,9,\ldots,25}^{10})^2 \rightarrow\phantom{.} \\
\underline{1}^4_3,3^2,(\overbrace{6,8, \ldots,24}^{10})^2(26) \rightarrow\phantom{.} \\
 1\,,3^3,(\overbrace{6,8,\ldots,24}^{10})^2(26) \rightarrow\phantom{.} \\
2^3, (\overbrace{\underline{5},7,9,\ldots,25}^{11})^2 \rightarrow.
\end{align*}

The fifth line is a repeat of the first to aid in following the cycle. In this case, the $\Omega$-positions and the $\alpha$-moves can be reconstructed from the $\alpha$-positions, but we will use this five-line format with the underline convention for ease of reading.

Here, $\underline{5}^2$ and $\underline{1}^4_3$ indicate the same things as in the previous example. This is the case for any $q_z.$
\end{example}

\begin{example} \label{4CycleExample}
 \normalfont

Here is a $4$-cycle in $G_{324}$.

\begin{align*}
    (\overbrace{2, 5, 8}^3)^{3},
    (\overbrace{10,12,14,16,18}^5)^{2},
     (\overbrace{\underline{21}, 23, 25}^3)^{2} \rightarrow \phantom{.} \\
     2^4, (\overbrace{ 5,8}^{2})^{3}, (\overbrace{10,12,14,16,18,20}^{6})^{2},
 (\overbrace{23,25}^{2})^{2} \rightarrow\phantom{.} \\
  \underline{1}^4_3,
 (\overbrace{ \underline{4,7}}^{2})^{3},
 (\overbrace{9,11,13,15,17,19}^{6})^{2},
 (\overbrace{22,24}^{2})^{2},
 \left(26\right) \rightarrow\phantom{.} \\
  1\,,
 (\overbrace{3,6, 9}^{3})^{3},
 (\overbrace{11,13,15,17,19}^{5})^{2},
 (\overbrace{22,24}^{2})^{2},\left(26\right)  \rightarrow\phantom{.} \\
 (\overbrace{2, 5, 8}^3)^{3},
    (\overbrace{10,12,14,16,18}^5)^{2},
     (\overbrace{\underline{21}, 23, 25}^3)^{2} \rightarrow.
\end{align*}

In the first partition, $\underline{21}$ means that the two $21$-parts are reduced.
This creates a fourth $2$-part.
In the third partition, $\underline{1}^4_3,  (\overbrace{ \underline{4,7}}^{2})^{3}$
means that a total of nine parts are reduced: three of the four $1$-parts
and the six parts with sizes $4$ and $7$. This results in a third $9$-part.
\end{example}

 \begin{example}  \label{4CycleExample_comp}

\normalfont Since the values in each pair of parentheses form an arithmetic progression, it is enough to indicate the first and last terms and the number of terms to indicate the progression, provided  that the parts affected by each $\alpha$-move are indicated. So,  the first  partition in the $4$-cycle above  will be written
 $$ (\overbrace{2\ldots8}^3)^{3},
    (\overbrace{10\ldots 18}^5)^{2},
     (\overbrace{\underline{21} \ldots 25}^3)^{2} $$

\end{example}

We now define a family $Q(x,y,z)$ of $4$-cycles. The cycles $q_z$ of  Examples \ref{simple} and \ref{ex1_0_11} are $Q(1,0,z)$ for $z=1,2,3,$ and $ z=11.$  The cycle of Example \ref{4CycleExample} is $Q(3,5,3)$. We will use  the compact form of Example \ref{4CycleExample_comp} in the definition.

\begin{definition}\label{notn:Q(x,y,z)}
    For $x\geq 1$, $y\geq 0$, and $z\geq 1,$ let $Q(x,y,z)$ be

    \begin{footnotesize}
    $ (\overbrace{2   \ldots 3x-1}^x)^{3},
    (\overbrace{3x+1  \ldots 3x+2y-1}^y)^{2},
     (\overbrace{\underline{3x+2y+2} \ldots 3x+2y+2z}^z)^{2}
     \rightarrow \\
      2^4, (\overbrace{ 5 \ldots 3x-1}^{x-1})^{3}, (\overbrace{3x+1  \ldots 3x+2y+1}^{y+1})^{2},
 (\overbrace{3x+2y+4  \ldots 3x+2y+2z}^{z-1})^{2} \rightarrow \\
   \underline{1}^4_3,
 (\overbrace{ \underline{4\ldots 3x-2}}^{x-1})^{3},
 (\overbrace{3x \ldots 3x+2y}^{y+1})^{2},
 (\overbrace{3x+2y+3 \ldots 3x+2y+2z-1}^{z-1})^{2},
 \left(3x+2y+2z+1\right) \rightarrow \\
   1\,,
 (\overbrace{3 \ldots 3x}^{x})^{3},
 (\overbrace{3x+2 \ldots 3x+2y}^{y})^{2},
 (\overbrace{3x+2y+3 \ldots 3x+2y+2z-1}^{z-1})^{2},\left(3x+2y+2z+1\right)  \rightarrow \\
 (\overbrace{2\ldots 3x-1}^x)^{3},
    (\overbrace{3x+1  \ldots 3x+2y-1}^y)^{2},
     (\overbrace{\underline{3x+2y+2} \ldots 3x+2y+2z}^z)^{2}.$
\end{footnotesize}

Also, let $n(x,y,z)=\tfrac{(3x+2y+2z)^2+3x+4z}{2}.$

\end{definition}

\begin{proposition} \label{prop4cycle3parameter1}
For  $x\geq 1$, $y\geq 0$, and $z\geq 1$,  $Q(x,y,z)$ is a $4$-cycle in $G_n$ for $n=n(x,y,z)=\tfrac{(3x+2y+2z)^2+3x+4z}{2}$.
\end{proposition}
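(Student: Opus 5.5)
Label the four partitions of Definition \ref{notn:Q(x,y,z)} as $\mu_1,\mu_2,\mu_3,\mu_4$, in order. The plan is to check directly that
$$[\mu_1,\alpha]\rightarrow[\mu_2,\Omega]\rightarrow[\mu_3,\alpha]\rightarrow[\mu_4,\Omega]\rightarrow[\mu_1,\alpha]$$
is a closed walk of four distinct nodes in $G_n$. By Definition \ref{Note_G_n} and Lemma \ref{PreSeparatedlemma}, this requires four things:
\begin{enumerate}
\item all four $\mu_i$ are separated;
\item $\mu_2,\mu_4\in T(n)$, i.e.\ if $p$ is the number of parts, there is no part of size $p$ or $p+2$;
\item the underlined moves are $\alpha$-moves taking $\mu_1$ to $\mu_2$ and $\mu_3$ to $\mu_4$;
\item $\Omega(\mu_2)=\mu_3$ and $\Omega(\mu_4)=\mu_1$.
\end{enumerate}
Once $\mu_1$ is shown to be a partition of $n(x,y,z)$, the others are partitions of the same $n$, since every move preserves the sum.

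\textbf{Counting parts and the sum.} For $\mu_1$, the number of parts is $3x+2y+2z$. Summing the arithmetic progressions gives
$$3\cdot\frac{x(3x+1)}{2}+2y(3x+y)+2z(3x+2y+z+1).$$
This should simplify to $\tfrac{(3x+2y+2z)^2+3x+4z}{2}$; it is a routine expansion. The part counts of the others are:
\begin{itemize}
\item $\mu_2$ has $4+3(x-1)+2(y+1)+2(z-1)=3x+2y+2z+1$ parts, which is exactly the size of the new part in $\mu_3$;
\item $\mu_4$ has $1+3x+2y+2(z-1)+1=3x+2y+2z$ parts, which is the size $3x+2y+2z$ of the top part of $\mu_1$.
\end{itemize}

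\textbf{The moves.}
\begin{itemize}
\item $\mu_1\to\mu_2$: reducing the two $(3x+2y+2)$-parts gives two $(3x+2y+1)$-parts, which extend the middle progression to $y+1$ terms. The new part has size $2$, giving $2^4$.
\item $\mu_2\to\mu_3$: this is the $\Omega$-move. Each block shifts down by one and the new $(3x+2y+2z+1)$-part is added. Note $2^4\to1^4$, and $(5\ldots3x-1)^3\to(4\ldots3x-2)^3$.
\item $\mu_3\to\mu_4$: reducing three $1$'s and the $3(x-1)$ parts $4,\dots,3x-2$ reduces $3x$ parts. This creates a $3x$-part and turns $(4\ldots3x-2)^3$ into $(3\ldots3x-3)^3$. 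Together with the three $3x$'s from $(3x\ldots)^2$ plus the new part, this gives the block $(3\ldots3x)^3$, and the middle block becomes $(3x+2\ldots3x+2y)^2$ with $y$ terms.
\item $\mu_4\to\mu_1$: this is the $\Omega$-move. The single $1$ vanishes, and the new part has size $3x+2y+2z$, joining $3x+2y+2z-1\to3x+2y+2z-2$ shifted blocks appropriately.
\end{itemize}
One must be careful at the boundary cases $x=1$, $y=0$, $z=1$, where some blocks are empty; I would treat these by checking that the formulas still read correctly with empty progressions.

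\textbf{Separation and the $T(n)$ conditions.} Within each block the differences are $2$ or $3$. Between blocks, the gaps are:
\begin{itemize}
\item in $\mu_1$: $3x-1$ to $3x+1$, and $3x+2y-1$ to $3x+2y+2$;
\item in $\mu_3$: $1$ to $4$, $3x-2$ to $3x$, $3x+2y$ to $3x+2y+3$, and the top part $3x+2y+2z+1$.
\end{itemize}
All are at least $2$. For $T(n)$:
\begin{itemize}
\item $\mu_2$ has $p=3x+2y+2z+1$, which exceeds every part, so neither $p$ nor $p+2$ occurs.
\item $\mu_4$ has $p=3x+2y+2z$, and its parts of that size or $p+2$ would be $3x+2y+2z$ or $3x+2y+2z+2$. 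Its parts are at most $3x+2y+2z-1$ in the last block and equal $3x+2y+2z+1$ for the top part, so both are avoided.
\end{itemize}

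\textbf{Main obstacle.} The main obstacle is bookkeeping in the $\mu_3\to\mu_4$ step, where three blocks merge, and in the degenerate parameter values. For the latter I would explicitly verify $Q(1,0,1)$ (which is $q_1$ in Example \ref{simple}) as a sanity check.

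Finally, the four nodes are distinct because $\mu_1\neq\mu_3$ (only $\mu_3$ contains a $1$-part), and because $\alpha$- and $\Omega$-nodes are distinct by bipartiteness. Hence the closed walk is a genuine $4$-cycle.
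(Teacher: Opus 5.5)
Your proposal is correct and is the same direct verification the paper gives (each $\Omega$-move reduces every part and creates a new largest part, while the two $\alpha$-moves reduce $2$ and $3x$ parts, creating a fourth $2$-part and a third $3x$-part); you write out the sum, part counts, separation, $T(n)$ and distinctness checks in more detail. A few small slips to fix in a write-up: you never list the between-block gaps of $\mu_2$ and $\mu_4$ (they are easy, e.g.\ $3x+2y+1\to 3x+2y+4$ and $1\to 3$); the first gap in $\mu_3$ is $1\to 3$ when $x=1$; and in $\Omega(\mu_4)$ the new $(3x+2y+2z)$-part pairs with the reduced top part $3x+2y+2z+1$, not with a shifted $3x+2y+2z-1$.
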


\begin{proof}
    The value of $n$ is a simple computation. The two $\Omega$-moves reduce all the existing piles by $1$ and create a new pile that is the largest in the result. The first $\alpha$-move reduces $2$ parts and creates a fourth $2$-part. The second $\alpha$-move reduces $3x$ parts and creates a third $3x$-part.
\end{proof}

\section{Parametric families of 4-cycles in Floridian solitaire} \label{Sec_Par4}

In Theorems \ref{thm4cyclesInGn} and \ref{thm4cyclesInGn_more}, we show that there exists a 4-cycle in $G_n$ for all $n>73$ and for $39$ values in the interval $16 \le n \le 72.$ Recall that Theorem \ref{CyclesTheorem} used the form $n=\binom{k}{2}+r$ with $1\le r <k$, giving the positive distance up from the nearest triangular number. In discussing $4$-cycles in Floridian solitaire, we will use the forms $n=2t^2+2t+c$ for $0 \le c < 2t+1$ and $n=2t^2+c$ for $0 \le c \le 2t-1$ giving the non-negative distance up from the nearest half square.  Every nonnegative integer has a unique expression of this form. It turns out to be helpful to separate the even and odd subcases.

\begin{definition}\label{notn:subintervalAtandBt}
For $t\ge 0$, let $A_t=[2t^2+2t,2t^2+2t+(2t+1)]$ and for $t\ge 1$, let $B_t=[2t^2,2t^2+(2t-1)]$.
\end{definition}

So $A_0=[0,1],B_1=[2,3],A_1=[4,5,6,7],B_2=[8,9,10,11],\ldots .$ It is easy to see that these intervals partition the nonnegative integers.

Call $n$ {\bf exceptional} if $G_n$ does not have a $4$-cycle $Q(x,y,z).$
We show in Theorem \ref{thm4cyclesInGn_more} that, aside from the $35$ values listed in Remark \ref{re:no4cycs}, where the computations showed that there was not one, there is a $4$-cycle in $G_n$ for every exceptional $n.$

\begin{proposition} \label{prop4cyclesINnMostGn}
    There is a $4$-cycle $Q(x,y,z)$ in $G_n$ for exactly the following nonnegative values of $n$:
    \begin{enumerate}
        \item The values $n=2t^2+2t+c$ for even $4\le c \le 2t+1$. These are in $A_t.$
        \item The values $n=2t^2+c$ for odd $5\le c \le 2t-3$. These are in $B_t.$
         \item The values $n=2t^2+2t+c$ for odd $7 \le c \le 2t-3$. These are in $A_t.$
         \item The values $n=2t^2+c$ for even $8\le c \le 2t-6$. These are in $B_t.$
    \end{enumerate}

These include  $14$ values $n<73$
\begin{itemize}
    \item from  $A_2,A_3,A_4,$ and $A_5$ the $n\in \{16,\ 28,30,\ 44,46,48,\ 64,66,68,70\}$,
    \item from  $B_4$ and $B_5$ the  $n\in \{37,\ 55,57\}$, and
     \item from $A_5$ the value $n=67.$
\end{itemize}

    The probability that a randomly chosen $n \le N$ is exceptional is less than $\sqrt{\frac{128}{N}}$.
\end{proposition}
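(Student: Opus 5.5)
The plan is to parametrize the values of $n(x,y,z)$ by the single quantity $m=3x+2y+2z$ and then read off, for each $m$, which offsets above the half-square $\lfloor m^2/2\rfloor$ can occur. Proposition \ref{prop4cycle3parameter1} already shows that $Q(x,y,z)$ is a $4$-cycle in $G_{n(x,y,z)}$. So the statement is purely arithmetic: determine the image of $n(x,y,z)=\frac{m^2+3x+4z}{2}$ over $x\ge1$, $y\ge0$, $z\ge1$. For fixed $m$, the triple $(x,y,z)$ is admissible exactly when $x,z\ge1$, $3x+2z\le m$ and $3x\equiv m \pmod 2$, since then $y=(m-3x-2z)/2\ge 0$ is an integer. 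Because $n-\lfloor m^2/2\rfloor$ will turn out to lie in the range allowed in Definition \ref{notn:subintervalAtandBt}, and because every nonnegative integer has a unique expression of the forms there, the value of $m$ is determined by $n$. This uniqueness is what yields the word ``exactly''.

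\emph{Even case, $m=2t$.} Here $x=2a$ with $a\ge1$, so $n=2t^2+c$ with $c=3a+2z$, subject to $3a+z\le t$. The largest value, $c=2t-3$, occurs at $a=1$, $z=t-3$, so $n\in B_t$.
\begin{itemize}
\item Taking $a=1$ and $1\le z\le t-3$ gives all odd $c$ with $5\le c\le 2t-3$.
\item Taking $a=2$ and $1\le z\le t-6$ gives all even $c$ with $8\le c\le 2t-6$.
\item For $a\ge3$, the constraint $z\le t-3a$ gives $c\le 2t-3a$, and $c\equiv a\pmod 2$. So $a$ odd gives $c\ge 9+2z\ge 11$ odd and $c\le 2t-9$, inside the $a=1$ range; $a$ even ($a\ge4$) gives $c\ge 12+2z\ge 14$ even and $c\le 2t-12$, inside the $a=2$ range. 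Hence $a\ge3$ adds nothing new.
\end{itemize}
These are items 2 and 4.

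\emph{Odd case, $m=2t+1$.} Here $x=2a+1$ with $a\ge0$, so $n=2t^2+2t+c$ with $c=2+3a+2z$, subject to $3a+z\le t-1$. This gives $c\le 2t$, so $n\in A_t$.
\begin{itemize}
\item Taking $a=0$ and $1\le z\le t-1$ gives all even $c$ with $4\le c\le 2t$.
\item Taking $a=1$ and $1\le z\le t-4$ gives all odd $c$ with $7\le c\le 2t-3$.
\item Larger $a$ again gives values already covered.
\end{itemize}
These are items 1 and 3. The list of the $14$ values below $73$ is then a direct check for $t\le5$. For example, in $A_5=[60,71]$ the covered offsets are $c\in\{4,6,8,10\}\cup\{7\}$, giving $64,66,68,70$ and $67$.

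For the density bound, I would count the exceptional offsets in each block. In $A_t$ (which has $2t+2$ elements) the uncovered offsets are $c\in\{0,1,2,3,5,2t-1,2t+1\}$ together with boundary effects. In $B_t$ they are $c\in\{0,1,2,3,4,6,2t-4,2t-2,2t-1\}$. In each case this is at most $8$ per block; the only case needing care is $B_t$, where for large $t$ the list has nine entries. This is where I would be careful, perhaps using $c=2t-1$ missing together with a sharper count, or bounding the pair $A_{t-1}\cup B_t$ jointly by $16$. The blocks meeting $[0,N]$ have $2t^2\le N$, so $t\le\sqrt{N/2}$. The total number of exceptional $n\le N$ is then at most $16\sqrt{N/2}=\sqrt{128N}$, and dividing by $N$ gives the stated probability.

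The main obstacle is this final bookkeeping. The parametrization itself is routine, but the constant $\sqrt{128}$ is tight enough that the per-$t$ count must be exactly at most $16$ across $A_t\cup B_t$. The partial last block and small $t$ (where some of the listed offsets coincide or fall outside the block) must also be checked. I would first verify the per-$t$ count precisely, and if it reaches $17$ for some $t$, handle those finitely many small $t$ by direct inspection.
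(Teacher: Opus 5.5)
Your proposal is correct and is essentially the paper's argument. Your cases $a\in\{1,2\}$ (even $m$) and $a\in\{0,1\}$ (odd $m$) are exactly the paper's cases $x\in\{1,2,3,4\}$, and your range-containment check for larger $a$ does the job of the paper's identity $n(x+4,y,z)=n(x,y+3,z+3)$. The density bound is the same count of $7+9=16$ exceptional values per pair of consecutive blocks.

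For the final bookkeeping, drop the ``at most $8$ per block'' remark. Then argue as the paper does. For $N\ge 60$, write $N\in A_t\cup B_{t+1}$ with $t\ge 5$, so $2t^2+2t\le N$ and hence $2t<\sqrt{2N}$. There are then fewer than $60+16(t-4)<16t$ exceptional values up to $N$, which gives the strict inequality. For $N<60$ the bound $\sqrt{128/N}$ exceeds $1$, so there is nothing to prove.
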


\begin{proof}

Recall that $Q(x,y,z)$ is in $G_n$ for $n=n(x,y,z)=\tfrac{(3x+2y+2z)^2+3x+4z}{2}.$
Observe that
$n(x+4,y,z) = n(x,y+3,z+3)$. So, we may restrict attention to $x \in \{1,2,3,4\}$. The nonnegative integers are partitioned into the intervals $A_t=[2t^2+2t,\ldots,2(t+1)^2-1]$ for $t\ge 0$ and $B_t=[2t^2,\ldots,2t^2+2t-1]$ for $t \ge 1$.    We now check which even and odd members of each interval have the values $n(x,y,z).$

\begin{enumerate}
    \item Let $x=1$ and $y+z=t-1$, with $y\ge 0$ and  $z \ge 1$.
Then $n(1,t-1-z,z)=2t^2+2t+2z+2$ for $z\in [1,t-1]$. These are the  values $2t^2+2t+c \in B_t$ for even $4 \le c \le 2t.$
  \item Let $x=2$ and $y+z=t-3$, with $y \ge 0$ and $z \ge 1.$
Then $n(2,t-3-z,z)=2t^2+3+2z$ for $z\in [1,t-3]$. These are the  values $2t^2+c \in A_t$ for odd $5 \le c \le 2t-3.$
 \item Let $x=3$ and $y+z=t-4$, with $y \ge 0$ and $t \ge 1.$
Then $n(3,t-4-z,z)=2t^2+2t+5+2z$ for $z\in [1,t-4]$. These are the values $2t^2+2t+c\in B_t$ for odd $7 \le c \le 2t-3.$
  \item Let $x=4$ and $y+z=t-6$, with $y \ge 0$ and $z \ge 1.$
Then $n(4,t-6-z,z)=2t^2+2z+6$ for $z\in [1,t-6]$. These are the values $2t^2+c \in A_t$ for even $8 \le c \le 2t-6.$
\end{enumerate}

The values $n(x,y,z)<73$ are those for which $t\le 5$ since $72=2\cdot 6^2+0$ is exceptional. There are $14$ such values.

Suppose that $N\in A_t \cup B_{t+1}$ and $N\ge 60$, the smallest member of $A_5$. Then,  $t\ge 5$ and $2t^2+2t\le N$ so $2t<\sqrt{2n}.$ If we show that there are fewer than $16t$ exceptional $n<N$, then the probability that a randomly chosen $ n \le N$ is exceptional is less than $\frac{16t}{N}<\frac{8\sqrt{2N}}{N}=\sqrt{\frac{128}{N}}$.  In each pair $A_j,B_{j+1}$ for $5 \le t$, there are $16$ exceptional values $n$, so less than $60+16(t-4)$ exceptional $n<N.$

\end{proof}

\begin{proposition} \label{prop:shiftedints}

The exceptional $n$ are exactly the integers of the following $16$ standard forms.

\begin{enumerate}
\item The values $n=2t^2+2t+c$ for $c\in\{0,2\}$ with $c \le 2t+1$.
\item The values $n=2t^2+c$  for $c\in\{1,3,2t-1\}$ with $c \le 2t-1.$
\item The values $n=2t^2+2t+c$ for $c\in\{1,3,5,2t-1,2t+1\}$ with $ c \le 2t+1.$
\item The values $n=2t^2+c$ for $c\in\{0,2,4,6,2t-4,2t-2\}$ with $c \le 2t-1.$
\end{enumerate}

Every exceptional $n$ also has at least one of the following $16$ alternate forms.
\begin{enumerate}
\item The values $n=2t^2+2t+c$ for $c\in\{-4,-2,0,2\}$.
\item The  values $n=2t^2+c$  for $c\in\{-3,-1,1,3\}$.
\item The values $n=2t^2+2t+c$ for $c\in\{-1,1,3,5\}$.
\item The values $n=2t^2+c$ for $c\in\{0,2,4,6\}$.
\end{enumerate}
\end{proposition}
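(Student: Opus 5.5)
The plan is to derive both lists directly from Proposition~\ref{prop4cyclesINnMostGn}, which gives exactly which $n$ carry a cycle $Q(x,y,z)$. Since the intervals $A_t$ and $B_t$ of Definition~\ref{notn:subintervalAtandBt} partition the nonnegative integers, I will work one interval at a time. For each interval I split by the parity of the offset $c$ and take the complement of the covered offsets. This yields the standard forms. The alternate forms then come from re-expressing the few large offsets near the top of an interval as small negative offsets relative to the next half-square or triangular-type base.

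\textbf{Standard forms.} First take $A_t$, where $n=2t^2+2t+c$ with $0\le c\le 2t+1$.
\begin{itemize}
\item For even $c$, the covered offsets are $4\le c\le 2t$. The largest even $c\le 2t+1$ is $2t$, so the uncovered even offsets are $c\in\{0,2\}$.
\item For odd $c$, the covered offsets are $7\le c\le 2t-3$. The uncovered odd offsets are $c\in\{1,3,5,2t-1,2t+1\}$.
\end{itemize}
Next take $B_t$, where $n=2t^2+c$ with $0\le c\le 2t-1$.
\begin{itemize}
\item For odd $c$, the covered offsets are $5\le c\le 2t-3$, leaving $c\in\{1,3,2t-1\}$.
\item For even $c$, the covered offsets are $8\le c\le 2t-6$. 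The uncovered even offsets are $c\in\{0,2,4,6,2t-4,2t-2\}$.
\end{itemize}
The side conditions $c\le 2t+1$ and $c\le 2t-1$ in the statement are there precisely so that, for small $t$, the listed offsets stay inside the interval. For small $t$ the covered ranges are empty and the listed values overlap or exhaust the interval; the complement description is still correct. I will note that coincidences among these values (for example $2t-4=4$) are harmless, since only the union matters. This step is pure bookkeeping, and the only care needed is at the parity boundaries, for instance checking that $2t$ rather than $2t+1$ is the top even offset in $A_t$.

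\textbf{Alternate forms.} Every standard form with a small fixed offset is already one of the alternate forms:
\begin{itemize}
\item $A_t$ with $c\in\{0,2\}$ falls in list~1, and $A_t$ with $c\in\{1,3,5\}$ falls in list~3;
\item $B_t$ with $c\in\{1,3\}$ falls in list~2, and $B_t$ with $c\in\{0,2,4,6\}$ falls in list~4.
\end{itemize}
The forms with $t$-dependent offsets are handled by the identities
\[
2t^2+2t-1=2t^2+2t+(-1),\qquad 2t^2+2t-4,\qquad 2t^2+2t-2,
\]
for $B_t$, and
\[
2t^2+4t+1=2(t+1)^2-1,\qquad 2t^2+4t-1=2(t+1)^2-3,
\]
for $A_t$. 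Concretely:
\begin{itemize}
\item $B_t$ with $c=2t-1$ becomes the list-3 form $2t^2+2t+(-1)$.
\item $B_t$ with $c=2t-4$ and $c=2t-2$ become the list-1 forms with offsets $-4$ and $-2$.
\item $A_t$ with $c=2t+1$ and $c=2t-1$ become the list-2 forms $2(t+1)^2+(-1)$ and $2(t+1)^2+(-3)$, with $t+1$ in place of $t$.
\end{itemize}
This shows that every exceptional $n$ has at least one of the alternate forms. The statement only claims "at least one", so no converse is needed.

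\textbf{Main obstacle.} The only real risk is an off-by-one or parity slip in translating the covered ranges of Proposition~\ref{prop4cyclesINnMostGn} into their complements. The labels $A_t$ and $B_t$ appear to be swapped in places in that proof, so I will recompute $n(x,y,z)$ for $x=1,2,3,4$ directly rather than trust the labels. I will also spot-check small $t$ against the $14$ listed non-exceptional values below $73$.
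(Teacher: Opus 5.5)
Your proposal is correct and follows the same route as the paper. The standard forms are the complements, interval by interval and parity by parity, of the ranges covered in Proposition~\ref{prop4cyclesINnMostGn}, and the alternate forms come from the same identities $2t^2+2t+(2t-1)=2(t+1)^2-3$, $2t^2+2t+(2t+1)=2(t+1)^2-1$ and $2t^2+(2t-j)=(2t^2+2t)-j$ for $j\in\{1,2,4\}$. Your explicit complement bookkeeping, and your catch that the $A_t$/$B_t$ labels are swapped inside that earlier proof, just fill in details the paper leaves implicit.
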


\begin{proof}

  The exceptional values are exactly those not enumerated by Proposition \ref{prop4cyclesINnMostGn}. The nonnegative integers are partitioned into intervals $A_t$ and $B_t.$ The standard forms use this same partition and each exceptional value is of only one of those forms. The alternate forms  replace five of the standard forms with forms having $c<0.$ This is valid because $2t^2+2t+(2t-1)=2(t+1)^2+(-3)$, $2t^2+2t+(2t+1)=2(t+1)^2+(-1)$, and  $2t^2+(2t-j)=(2t^2+2t)+(-j)$ for $j \in \{1,2,4\}.$ Because the upper bounds were discarded, the alternate form is not always unique when $t<6.$  It is not hard to check that every nonnegative integer of these alternate forms is exceptional, but that fact is not needed.
\end{proof}

For each of the $16$ alternate forms, we wish to show that there is a $4$-cycle in $G_n$ for all $n>73$ of that form. One of those cases is  $n=2t^2+2.$ There we are considering $n=2,10,20,34,52,74,\ldots .$ Our method is to  give a particular $4$-cycle for  $n_0=34$ and show that it begins a one-parameter family of $4$-cycles for all $n=2t^2+2 \ge n_0$. The same method is used for the other $15$ alternate forms. In each case, we use the smallest $n_0$ possible. For $n=2t^2+1$, there are no $4$-cycles in $G_n$ for $n \in \{3,9,19,33,51,73\}$ and we will start with $n_0=99.$ In the  $15$ other cases, we can choose $n_0<73.$

\begin{example} \label{ex:2tp2b} \normalfont

In the case $n=2t^2+2$ we list particular $4$-cycles in $G_n$ for $n=34,\,52$,  and $74$ corresponding to $t=4,\,5$, and $6$.
Spaces have been added for clarity.
These 4-cycles are

\begin{align*}
  2^34^3\phantom{9^212^2}\enspace\underline{8}^2 \rightarrow&
  &2^44^3\phantom{9^212^2\enspace} 7^2 \rightarrow&
&\underline{1}^4_33^3\underline{6}^2\phantom{10^212^2\enspace}{9} \rightarrow& &
1\,3^35^3\enspace\phantom{10^212^2} {9}\rightarrow\ ,\phantom{\text{ and}} \\
    2^34^37^2\phantom{\enspace 9^2}\underline{10}^2 \rightarrow&
    &2^44^37^2\phantom{11^2\enspace} 9^2\rightarrow&
&\underline{1}^4_33^3\underline{6}^28^2\phantom{10^2\enspace }11 \rightarrow& &
    1\,3^35^38^2\enspace\phantom{10^2} 11 \rightarrow\ , \text{ and} \\
  2^34^37^29^2\enspace \underline{12}^2 \rightarrow&  &2^44^37^29^2\enspace \,11^2 \rightarrow&   &\underline{1}^4_33^3\underline{6}^28^210^2\enspace{13} \rightarrow& & 1\,3^35^38^210^2\enspace 13 \rightarrow\ .
  \phantom{\text{ and}}
\end{align*}

\end{example} 

It seems clear that there is a pattern that continues and gives a $4$-cycle for each  $n=2t^2+2$ for $t \ge 4$. The next three theorems allow for a one-sentence proof, given in case \ref{prop4cyclePlus2a_S} of Proposition \ref{propfamilies}. That proof simply notes that the first line is a valid cycle $C_0$ in $G_{34}$ and that each arc lifts according to certain rules giving a cycle $C_1$ that again lifts.

It is possible to give a proof similar to Proposition \ref{prop4cycle3parameter1} using a scheme with first member $2^34^3 (\overbrace{7\ldots 2y+5}^y)(\underline{2y+6})^2$ for $y \ge 0.$ However, there are $16$ cases to explain, and the method we use instead applies to other cycles of length $4$, and more. For example, the unique $8$-cycles of $G_6$ and $G_{14}$ belong to a family of $8$-cycles in $G_n$ for $n=2t^2+2t+6.$

We first establish some notation. Example \ref{ex:pat} illustrates the notation using the final arcs of the cycles in Example \ref{ex:2tp2b}.

\begin{definition}
A {\bf variable part} is an expression of the form $(m+c)^i$ or $(m+j+c)^{i}$, where $m$ and $j$ are variables taking positive integer values, and $c, i$ are integer constants. A {\bf pattern} is an expression $H s_1 \rightarrow H^* s_2$ where $H$ and $H^*$ are variables taking integer partitions as values, while $s_1$ and $s_2$ are strings whose members are variable parts using the same variables $m$ and $j$. The constant $c$ and exponent $i$ can vary from part to part. Consider an arc $\lambda_1\rightarrow \lambda_2$, where $\lambda_1$ and $\lambda_2$ are integer partitions. We say that the arc {\bf matches}  a particular pattern if there is an assignment of integer partitions to $H$ and $H^*$ and integer values to $m$ and $j$ that transform the pattern into the arc. We further require that the assignments are such that no part of $H$ is as large as a part of $s_1$ and that no part of  $H^{*}$ is as large as a part of $s_2.$
\end{definition}

\begin{example} \normalfont

 \label{ex:pat} We will say an $\Omega$-arc $\lambda_1\rightarrow \lambda_2$ matches the pattern  \newline
$H\quad(m+j) \rightarrow H^*\quad (m+j-1)^2$ if we can assign positive integer values to $m$ and $j$ so that the largest parts of $\lambda_1$ and $\lambda_2$ have sizes $m+j$ and $m+j-1$, with $m+j$ occurring once and $m+j-1$ occurring twice. Then, $H$ and $H^*$ are assigned the partitions consisting of all the smaller parts of $\lambda_1$ and $\lambda_2$. It follows that $\lambda_1$ has $m+j-1$ parts, since the second $(m+j-1)$-part must be the new part of $\lambda_2.$ This means that $H^*$ is all the parts of $H$ reduced by $1.$

Consider the last arcs of the cycles in Example \ref{ex:2tp2b}:
\begin{align*}
   &(e)\text{\hspace{1.5in}} &1\,3^35^3\enspace\phantom{8^210^2x1} 9 \qquad  \rightarrow& &2^34^3\phantom{7^29^2xx}&\ {8}^2\ , \phantom{\text{ \ \ and}} \text{\hspace{1.5in}}  \\
     &(e')\text{\hspace{1.5in}} &1\,3^35^38^2\phantom{10^2}\quad 11 \qquad \rightarrow& &2^34^37^2\phantom{9^2}\quad &{10}^2\ , \text{ \ \ and}
     \text{\hspace{1.5in}}\\
     &(e'')\text{\hspace{1.5in}} &1\,3^35^38^210^2\quad 13\qquad \rightarrow& &2^34^37^29^2\quad &{12}^2\ .   \phantom{\text{ \ \ and}}  \text{\hspace{1.5in}}
\end{align*}

These three $\Omega$-arcs  match the pattern $$H\quad(m+j) \rightarrow H^*\quad (m+j-1)^2.$$
For the arc $(e)$  with $(m,j)=(7,2),\ H= 1\,3^35^3$, and $H^*=2^34^3.$\newline
For the arc $(e')$ with  $(m,j)=(9,2)$, $H= 1\,3^35^38^2$, and $H^*=2^34^37^2.$ \newline
For the arc $(e'')$ with $(m,j)=(11,2),$  $H= 1\,3^35^38^210^2$ and $H^{*}=2^34^37^29^2.$\newline
\end{example}

We will show in Theorems \ref{omegalifts} and \ref{alphalifts} that (as in Example \ref{ex:pat}) for certain patterns, if an arc $e \in G_{n_0}$ matches that pattern with $m=m_0$, then there is an arc $e'\in G_{n_0+2m_0}$ which matches the same or a similar pattern with $m=m_0+2$. Then, we will show in Theorem \ref{1paramfamilies} that, under certain circumstances concerning the arcs of a $2j$-cycle $C_0\in G_{n_0}$ and a fixed value $m=m_0$, there is a cycle $C_1 \in G_{n_0+2m_0}$ which leads to a family of $2j$-cycles, as in Example \ref{ex:2tp2b}.

The function max gives the largest part of a partition, e.g. $\max(1 3^3 5^3) = 5.$
\begin{example}\label{ex:Om4} \normalfont Suppose that $G_n$ has an $\Omega$-arc $e$  matching this pattern:  $$\lambda=H\quad(m+j) \rightarrow H^*\quad (m+j-1)^2$$ for    $j \ge 1$, $m=m_0$, and such that $\max(H)\le m-1.$  It follows that $G_{n'}=G_{n+2m+4}$ has an $\Omega$-arc $e':$ $$\lambda'=H(m+1)^2\quad(m+j+2) \rightarrow H^* m^2\quad (m+j+1)^2.$$ To see this, note that $\lambda$ must have  $m+j-1$ parts so that the new part in $\Omega(\lambda)$ is an (additional) $(m+j-1)$-part. Then the partition $\lambda'=H\,(m+1)^2\quad (m+j+2)$ is separated, since $\max(H)\le m-1,$ and is a node in $G_{n'}$ for $n'=n+2(m+1)+2=n+2m+4.$ Here, $\lambda'$ has $2$ more parts than $\lambda$, so $m+j+1$ parts. Then the $\Omega$-move reduces the parts of $$\lambda'=H\,(m+1)^2\quad(m+j+2)$$ by $1$ and the new part is an (additional) $(m+j+1)$-part yielding $$H^*m^2\quad (m+j+1)^2.$$

 We say that arc $e$ {\bf lifts} to arc $e'$.
 Note that $e'$ has the same pattern for $m=m_0+2$
 that $e$ has for $m=m_0.$  \end{example}

The four arcs of the first cycle, call it $C_0$, in Example \ref{ex:2tp2b} match the top patterns  for items $\alpha_4, \Omega_2, \alpha_5, \Omega_4$ of Theorems \ref{omegalifts} and \ref{alphalifts}, all for $m=7$. The next two cycles are the same for $m=9$ and $m=11.$ As we will see, in Theorem \ref{1paramfamilies}, once one verifies that each arc lifts and that the four new arcs have the same patterns, we have established that there is a one-parameter family of $4$-cycles for $n=2t^2+2$ with $n \ge 32.$ This is case \ref{prop4cyclePlus2a_S} of Proposition \ref{propfamilies}, which simply notes that  $C_0$ is a valid $4$-cycle and that each arc lifts with $m=7$ giving a cycle $C_1$ that again lifts.

The proof of the next theorem  is illustrated in greater detail, for the particular case $(\Omega_4)$, in Example \ref{ex:Om4}.

\begin{theorem} \label{omegalifts}  For each of the five pairs of patterns below, if $G_n$  has an $\Omega$-arc $e$ that matches the first pattern, then $G_{n+2m+4}$ has an $\Omega$-arc $e'$ that matches the second pattern.
\begin{itemize}
\item [$(\Omega_1)$]
\hspace{0.1in}Suppose $\max(H)\le m-3$.
Then,
\begin{align*}
   \ \phantom{m} H\,\phantom{(-1)^2} (m-1)^2
   \rightarrow& \
   \  H^*\,\phantom{(m-2)^2} (m-2)^2(m+2)
   \  \text{ lifts to} \\
     H\,(m-1)^2\  (m+2)^2
     \rightarrow& \
     H^*\,(m-2)^2\  (m+1)^2(m+4).
\end{align*}

\item[$(\Omega_2)$]
\hspace{0.1in}Suppose $\max(H)\le m-2$.
Then,
\begin{align*}
    \phantom{\ }H\,\phantom{m^2}  m^2 \quad \;
    \rightarrow& \
    \phantom{\  -1} H^*\,\phantom{(m1)^2} (m-1)^2(m+j)
    \quad \  \text{ lifts to} \\
   H\,m^2 \  (m+2)^2
   \rightarrow& \
   H^*\,(m-1)^2 \   (m+1)^2(m+j+2).
\end{align*}

\item[$(\Omega_3)$]
\hspace{0.1in}Suppose $\max(H)\le m-2$.
Then,
\begin{align*}
    \  H\,\phantom{m^2}  (m+1)^2
    \rightarrow& \
    \phantom{(m)^2} H^*\phantom{-1}\ \,  m^2(m+j)
   \quad\quad \  \text{ lifts to} \\
   H\,m^2\  (m+3)^2
   \rightarrow& \
   H^*\,(m-1)^2\  (m+2)^2(m+j+2).
\end{align*}

\item[$(\Omega_4)$]
\hspace{0.1in}Suppose $\max(H)\le m-1$ and $j\ge 1$.
Then,
\begin{align*}
    \  H\,\phantom{(m+1)^2}   (m+j)
    \  \;\; \rightarrow& \
    \  H^*\,\phantom{m^2}  (m+j-1)^2
    \   \text{ lifts to} \\
   H\,(m+1)^2\  (m+j+2)
   \rightarrow& \
   H^*\,m^2\  (m+j+1)^2.
\end{align*}

\item[$(\Omega_5)$]
\hspace{0.1in} Suppose $\max(H)\le m-3$ and $j \ge 2$.
Then,
{\small
\begin{align*}
    \  \phantom{m}H\,\phantom{(-1)^2} \  \quad m^2(m+j) \quad
    \rightarrow& \
    \  \phantom{m} H^*\,\phantom{(-1)^2} \  (m-1)^2(m+j-1)^2
    \  \text{ lifts to} \\
   H\,(m-1)^2 \  (m+2)^2(m+j+2)
   \rightarrow& \
   H^*\,(m-2)^2 \  (m+1)^2(m+j+1)^2.
\end{align*}
    }
\end{itemize}

\end{theorem}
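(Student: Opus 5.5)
The plan is to treat the five cases uniformly, following Example \ref{ex:Om4}, which already carries out case $(\Omega_4)$. An arc $e:\lambda\rightarrow\Omega(\lambda)$ of $G_n$ is an $\Omega$-arc exactly when $\lambda\in T(n)$. That is, $\lambda$ is separated and has no part of size $p$ or $p+2$, where $p$ is its number of parts. By Lemma \ref{PreSeparatedlemma}, this is equivalent to both $\lambda$ and $\Omega(\lambda)$ being separated. So for each pair of patterns I will check three things about the lifted partition $\lambda'$. First, $\lambda'$ is a partition of $n+2m+4$. Second, $\lambda'$ is separated and $\Omega(\lambda')$ equals the displayed second partition, which is automatically separated once it is shown to equal the right-hand side. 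Third, that right-hand side matches the second pattern with the same $j$ and with $H,H^*$ unchanged.

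The key step is to read off the number of parts $p$ of $\lambda$ from the first pattern. The new part created by $\Omega$ must be the one extra part of the displayed right-hand string that does not come from reducing a part of $H$ or of the displayed left-hand string. This gives:
\begin{itemize}
\item $(\Omega_1)$: $p=m+2$;
\item $(\Omega_2)$ and $(\Omega_3)$: $p=m+j$;
\item $(\Omega_4)$: $p=m+j-1$;
\item $(\Omega_5)$: $p=m-1$ (the displayed right side has $(m-1)^2(m+j-1)^2$, and one $(m+j-1)$ or one $(m-1)$ must be new).
\end{itemize}
In $(\Omega_5)$, reducing $m^2(m+j)$ gives $(m-1)^2(m+j-1)$, so the new part is $m+j-1$ and $p=m+j-1$. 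In each case this also forces $H^*$ to be $H$ with every part reduced by $1$, with $1$-parts deleted. I will then check that in every case $\lambda'$ adds exactly two parts to $\lambda$ and that the sum increases by $2m+4$. For example, in $(\Omega_1)$ we add $(m+2)^2$, and in $(\Omega_2)$ we add $(m+2)^2$ while $m+j$ is absent from $\lambda$. In general the added parts total $2m+4$ once the terms that shift are accounted for. Hence $\lambda'$ has $p+2$ parts, so the new part of $\Omega(\lambda')$ is $p+2$. Reducing every part of $\lambda'$ then reproduces the displayed second pattern, with the new part landing at the correct size ($m+4$, $m+j+2$, $m+j+1$, \dots).

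Separatedness of $\lambda'$ follows from the hypotheses $\max(H)\le m-3$, $m-2$ or $m-1$, together with $j\ge 1$ or $j\ge 2$. These bounds keep the largest part of $H$ at least $2$ below the smallest displayed part of $\lambda'$, and the conditions on $j$ keep $m+j+2$ away from $m+2$ or $m+3$. I will check the gaps between consecutive displayed parts directly. The same inequalities show that no part of $H$ (respectively $H^*$) is as large as a displayed part, which the matching definition requires.

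I expect the main obstacle to be the bookkeeping of the boundary cases, not any conceptual step. These cases are, for instance, when $H$ contains $1$-parts that vanish under $\Omega$, and when $j$ is small, so that $m+j$ collides with $m+2$ in $(\Omega_2)$ or $(\Omega_3)$. I will handle them by noting that the original arc $e$ already being valid rules out any collision for $\lambda$, and that the shifts by $2$ preserve all the gaps.
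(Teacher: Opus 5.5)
Your proposal is correct and follows essentially the same route as the paper. Both arguments identify the new (largest) part $p$ of $\Omega(\lambda)$, so that $\lambda$ has $p$ parts; both observe that $\lambda'$ has two more parts and weight $n+2m+4$, so its new part is $p+2$; and both use the bounds on $\max(H)$ and $j$ to get separatedness. One correction: delete the bullet claiming $p=m-1$ for $(\Omega_5)$. The correct value is $p=m+j-1$, as you note immediately afterward, and it is what makes the lifted new part $m+j+1$.
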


Note that for a given $m$, no $\Omega$-arc $\lambda_1 \rightarrow \Omega(\lambda_1)$ can match more than one of the patterns.

\begin{proof}
    Let the first arc be $\lambda_1\rightarrow \lambda_2$ and the second arc be $\lambda_1'\rightarrow \lambda_2'.$ Let $p$ be the largest part of  $\lambda_2$. Then, $p$ is the new part of $\lambda_2$ and $\lambda_1$ has $p$ parts. It is easy to see that $\lambda_1'$ is in $G_{n+2m+4}$ and has two more parts than $\lambda_1$ so the new part of $\lambda_2$ is a $p+2$-part. The condition on $\max(H)$  means that $\lambda_1'$ is separated. Furthermore, $\lambda_2'$ results from $\lambda_1'$ by the $\Omega$-move since reducing the parts of $H$ by one gives $H^*$, all the other parts reduce by $1$, and then the new part is a $p+2$-part.
\end{proof}

Instead of the five pairs of patterns above, we will need nine pairs of patterns for $\alpha$-moves. This is because an arc matching $H\,\underline{m}^2 \rightarrow H^*\,(m-1)^2$ is a different case than an arc matching $H\,m^2 \rightarrow H^*\,m^2.$ The parts changed in $H$ can be determined by comparison to $H^*$. For clarity, however, a partition assigned to $H$ can be thought to have underlines using Notation \ref{note_alphamove}.

\begin{theorem} \label{alphalifts} For each of the nine pairs of patterns below, if $G_n$  has an $\alpha$-arc that matches the first pattern, then $G_{n+2m+4}$ has an $\alpha$-arc that matches the second pattern for the same $H$ and $H^*.$

\begin{itemize}
\item[$(\alpha_1)$]
\hspace{0.1in}Suppose $\max(H)\le m-2$. Then,
\begin{align*}
    H\, \phantom{\underline{m}^2}
     \underline{m}^2\phantom{m+2}
    \rightarrow&\quad
     H^*\, \phantom{(m-1)^2}( m-1)^2\phantom{(m)}
    \text{ lifts to}\\
    H\,\underline{m}^2\  (m+2)^2\rightarrow& \
    H^*\,(m-1)^2 \ (m+2)^2.
     \phantom{\text{ lifts to}\ }
\end{align*}

\item[$(\alpha_2)$]
\hspace{0.1in}Suppose $\max(H)\le m-3$ and $j\ge 2$. Then,
{\small
\begin{align*}
    \,\, H \phantom{(m-1)^2\ }
    \underline{m}^2(\underline{m+j})
    \phantom{(m+j)}
     \rightarrow& \quad
    H^* \phantom{m-1}\,\
    (m-1)^2(m+j-1)   \text{ lifts to} \\
    H\,(m-1)^2\  (\underline{m+2})^2(\underline{m+j+2})
    \rightarrow& \
    H^*\,(m-1)^2\  (m+1)^2(m+j+1).
\end{align*}
}
\item[$(\alpha_3)$]
\hspace{0.1in}Suppose $\max(H)\le m-1$ and $j\ge 1$. Then,
\begin{align*}
    \  H\, \phantom{(m+1)}\,\  (\underline{m+j})\
    \rightarrow& \quad
    H^*\,\phantom{(m+1)^2}(m+j-1)
    \ \text{ lifts to} \\
    H\,(m+1)^2 \  (\underline{m+j+2})\rightarrow& \
    H^*\,(m+1)^2\  (m+j+1).
\end{align*}

\item[$(\alpha_4)$]
\hspace{0.1in}Suppose $\max(H)\le m-2$ and $j\ge 1$. Then,
\begin{align*}
   \  H\,\phantom{m^2}\  (\underline{m+j})^2\phantom{m}
    \rightarrow& \quad
    H^*\,\phantom{m^2} (m+j-1)^2
   \  \text{ lifts to}\\
    H\,m^2\  (\underline{m+j+2})^2
    \rightarrow& \
    H^*\,m^2\  (m+j+1)^2.
\end{align*}

\item[$(\alpha_5)$]
\hspace{0.1in}Suppose  $\max(H)\le m-1$ and $j\ge 1$.
Then,
\begin{align*}
   \  H\,\phantom{(m+1)^2}(m+j)\;\;
    \rightarrow& \quad
    H^*\,\phantom{(m+1)^2}\  (m+j)
    \  \text{ lifts to}\\
    H\,(m+1)^2\  ({m+j+2})
    \rightarrow& \
    H^*\,(m+1)^2\  (m+j+2).
\end{align*}

\item[$(\alpha_6)$]
\hspace{0.1in}Suppose $\max(H)\le m-3$ and $j\ge 2$.
Then,
\begin{align*}
   \quad H\,\phantom{(m-1)^2}\  m^2(m+j)\quad
    \rightarrow& \quad \
    H^*\, \phantom{(m-1)^2}\  m^2(m+j)
    \  \text{ lifts to}\\
    H\,(m-1)^2\  (m+2)^2({m+j+2})
    \rightarrow& \
    H^*\,(m-1)^2\ (m+2)^2({m+j+2}).
\end{align*}

\item[$(\alpha_7)$]
\hspace{0.1in}Suppose $\max(H)\le m-3$ and $j\ge 2$.
Then,
\begin{align*}
   \quad H\,\phantom{(m-1)^2}\  \underline{m}^2(m+j)
    \quad
    \rightarrow& \quad \
    H^*\,\phantom{(m-1)^2} (m-1)^2 (m+j)
    \  \text{ lifts to}\\
    H\,(m-1)^2\  (\underline{m+2})^2({m+j+2})
    \rightarrow& \
    H^*\,(m-1)^2\  (m+1)^2({m+j+2}).
\end{align*}

\item[$(\alpha_8)$]
\hspace{0.1in}Suppose $\max(H)\le m-2$ and $j\ge 0$.
Then,
\begin{align*}
   \  H\, \phantom{m^2} \  ({m+j})^2 \
    \rightarrow& \quad
    H^*\, \quad (m+j)^2\phantom{(m23^2}
     \  \text{ lifts to}\\
    H\,m^2\  ({m+j+2})^2
    \rightarrow& \
    H^*\,m^2\  (m+j+2)^2.
\end{align*}
\item[$(\alpha_9)$]
\hspace{0.1in}Suppose $\max(H)\le m-4$ and $j\ge 1$.
Then,
\begin{align*}
   \quad H\,\phantom{(m-2)} (\underline{m-1})^2(m+j)^2
    \;\;\;\;
    \rightarrow& \
    \ \;\; H^*\,\phantom{(m)} \;\;\;\; (m-2)^2
    (m+j)^2
     \  \text{ lifts to}\\
    H\,(m-2)^2\ (\underline{m+1})^2({m+j+2})^2
    \rightarrow& \
    H^*\,(m-2)^2\  m^2({m+j+2})^2.
\end{align*}

\end{itemize}

\end{theorem}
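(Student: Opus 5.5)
Write the given $\alpha$-arc as $\lambda_1=H\,s_1\rightarrow\lambda_2=H^*s_2$ and the proposed lift as $\lambda_1'=H\,s_1'\rightarrow\lambda_2'=H^*s_2'$. For each of the nine cases we must check three things.
\begin{enumerate}
\item $\lambda_1'\rightarrow\lambda_2'$ really is an $\alpha$-move, i.e.\ obtained by reducing some set of parts by $1$ and adding a new $s$-part, where $s$ is the number of reduced parts.
\item $\lambda_2'$ is separated.
\item $\lambda_2'$ has no part of size $p'$ or $p'+2$, where $p'$ is its number of parts, so that $\lambda_2'\in T(n+2m+4)$ by Lemma \ref{PreSeparatedlemma}.
\end{enumerate}
We also confirm that $\lambda_1'$ is separated (it is an $\alpha$-position, so it need only lie in $S$) and that the size increases by exactly $2m+4$. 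The size check is immediate: in each pattern the lifted string $s_1'$ is $s_1$ with every part of the ``$m+j$'' block raised by $2$ and a new pair inserted. Counting parts of sizes $m+j$, $m-1$, $m+1$, etc., the increase is always $2(m+1)+2$, $2m+2\cdot 2$, or similar, totalling $2m+4$. I would tabulate this quickly for all nine.

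For item 1, the key observation is that the parts changed inside $H$ are the same in both arcs, because $H\to H^*$ is unchanged. The only differences are the inserted pair and the raised top parts. In cases $(\alpha_1)$, $(\alpha_3)$--$(\alpha_6)$, $(\alpha_8)$ the inserted pair is not reduced and the top parts behave as before. Hence the set of reduced parts has the same cardinality $s$, the new part has the same size $s$, and it lands in the same place. We must check that the new part is either the listed part appearing in the pattern, or lies in $H^*$ unchanged. This reduces to the hypothesis that the original arc matched with $H^*$ as given.

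In cases $(\alpha_2)$, $(\alpha_7)$, $(\alpha_9)$ the roles shift. In the original arc the pair $m^2$ (resp.\ $(m-1)^2$) is reduced. In the lift, the old pair $(m-1)^2$ sits inertly in $H$'s position, and the reduced pair is now $(m+2)^2$ (resp.\ $(m+1)^2$). The count of reduced parts is still the same, namely $|$reduced in $H|+2+$(top). So $s$ is unchanged and the new part, which belongs to $H^*$ or is $(m+j-1)$, is unchanged.

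For item 2, separatedness follows from $\max(H)\le m-c$: the gap between $H$ or $H^*$ and the inserted pair is at least $2$. Since $\max(H^*)\le\max(H)$ or the new part is at most the old bound, the same holds for $H^*$. The inserted pair and the shifted top parts differ by at least $2$ because $j\ge1$ or $j\ge2$ as required. I will spot-check the tightest spacings, e.g.\ $(m-1)$ versus $(m+1)$ in $(\alpha_2)$ and $(\alpha_9)$ and $m$ versus $m+j+2$.

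The main obstacle is item 3, the $T(n)$ condition, since $p'$ changes. Here $\lambda_2'$ has exactly two more parts than $\lambda_2$, so $p'=p+2$. I then argue that the top parts of $\lambda_2'$ are those of $\lambda_2$ shifted by $+2$, so $p'$ and $p'+2$ avoid them exactly when $p$ and $p+2$ avoid the old ones. For $H^*$ and the inserted pair I would use $p\ge$ (largest part) roughly. Concretely, since $\lambda_1\to\lambda_2$ is followed by an $\Omega$-move whose new part is (in practice) the top part, $p$ is near $m+j$. In particular $p'\ge m+2$ exceeds $\max(H^*)+2$ and the inserted pair's size minus $2$. If that bound is not automatic, I would derive it from the fact that $\lambda_2$ has at least the parts of $s_2$ plus enough of $H^*$. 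Failing that, I would fall back to a direct verification of each of the nine cases separately.
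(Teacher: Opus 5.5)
Your item 1 and the size count are correct, and they are exactly what the paper's proof does. The lift reduces the same parts of $H$, so the number $s$ of reduced parts is unchanged and the new part is the same $s$-part sitting in $H^*$; the bound on $\max(H)$ makes $\lambda_1'$ separated. The gap is in items 2 and 3, which you rightly say are needed for the lifted arc to lie in $G_{n+2m+4}$.

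In item 2 you claim that $\max(H^*)\le\max(H)$, or that the new part obeys the old bound. This is false. The new part has size $s$, and the hypotheses only bound it by the smallest part of $s_2$, not by $\max(H)$. For example, take $(\alpha_5)$ with $m=j=3$ and the arc $1^3\,6\rightarrow 3\,6$ of $G_9$. Here $H=1^3$, $H^*=3$, $\max(H)=1\le m-1$, and $3\,6\in T(9)$. Its lift is $1^3\,4^2\,8\rightarrow 3\,4^2\,8$ in $G_{19}$, whose endpoint is not separated.

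In item 3, the premise that $p$ is ``near $m+j$'' does not follow from the hypotheses. The $T$-condition also does not transfer: only the top parts are shifted by $2$, while the parts of $H^*$ stay put (and must now also avoid $p+4$), and the inserted pair is entirely new. For example, take $(\alpha_8)$ with $m=7$, $j=0$ and the arc $1^2\,7^2\rightarrow 2\,7^2$ of $G_{16}$; its endpoint has $3$ parts and no $3$- or $5$-part, so it lies in $T(16)$. Its lift is $1^2\,7^2\,9^2\rightarrow 2\,7^2\,9^2$. That endpoint is separated but has $5$ parts and a $7$-part, so it is not in $T(34)$.

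So no case-by-case verification can close items 2 and 3 under the stated hypotheses: for the specific lift (same $m$, $j$, $H$, $H^*$), the statement as written can fail. The paper's own proof stops where your item 1 stops. What is missing is effectively supplied in the proof of Theorem \ref{1paramfamilies}:
\begin{itemize}
\item There $\lambda_2'$ is also the starting point of a lifted $\Omega$-arc, so it is separated by that $\Omega$-pattern's bound on its own $\max(H)$. This is what actually controls $\max(H^*)$.
\item $\Omega(\lambda_2')$ is the starting point of the next lifted $\alpha$-arc, so it is separated by that arc's bound.
\item Lemma \ref{PreSeparatedlemma} then gives $\lambda_2'\in T(n+2m+4)$.
\end{itemize}
To repair your argument, you have two options. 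You can prove only the formal statement: the lifted pair is an $\alpha$-move from a separated partition $\lambda_1'$ of $n+2m+4$ that reduces the same $s$ parts, and $\lambda_2'\in T(n+2m+4)$ is deferred to the cycle setting. Alternatively, add an explicit hypothesis guaranteeing membership, for instance that $\lambda_2'$ and $\Omega(\lambda_2')$ are both separated.
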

Note that if $m$ is given, no $\alpha$-arc $\lambda_1 \rightarrow \lambda_2$ can match more than one of the patterns. Also, $\lambda_2$ is uniquely determined by $\lambda_1$ and the indicated $\alpha$-move.
\begin{proof}
 Let the first arc be $\lambda_1\rightarrow \lambda_2$ and the second arc be $\lambda_1'\rightarrow \lambda_2'.$ In each case, the new part of $\lambda_2$ is a $p$-part in $H^*$, for the same $p$,  where $p$ is the number of parts changed in the move. It is easy to see that $\lambda_1'$ is in $G_{n+2m+4}$ and has two more parts than $\lambda_1.$ The condition on $\max(H)$ means that $\lambda_1'$ is separated. Furthermore, $\lambda_2'$ results from $\lambda_1'$ by the $\alpha$-move, which changes the same parts in $H$, thus creating a new $p$-part for the same $p.$
\end{proof}

We use the next result $16$ times for our analysis of $4$-cycles. It is no harder to state and prove for cycles of arbitrary even length, and we do so.

\begin{theorem} \label{1paramfamilies} Let  $C_0$ be a $2k$-cycle $$C_0=\lambda_1 \rightarrow \lambda_2 \rightarrow \ldots \rightarrow \lambda_{2k}\rightarrow$$ in $G_{n_0}$. Let $m=m_0$ be a fixed integer and $n_1=n_0+2m_0+4.$ Suppose that $C_0$ satisfies the following conditions:
\begin{itemize} \item Every $\Omega$-arc $\lambda_{i}\rightarrow \lambda_{i+1}$ of $C_0$ matches the first pattern of one of the five cases of Theorem \ref{omegalifts} for $m=m_0$, lifting to the arc  $\lambda'_{i}\rightarrow \lambda'_{i+1}$ in $G_{n_1}.$
\item Every $\alpha$-arc $\lambda_i\rightarrow \lambda_{i+1}$ of $C_0$ matches the first pattern of one of the nine cases of Theorem \ref{alphalifts} for $m=m_0$, lifting to the arc $\lambda'_i\rightarrow \lambda'_{i+1}$ in $G_{n_1}.$
\end{itemize}
Then, for $n_1=n_0+2m_0+4$, there is a $2k$-cycle $$C_1=\lambda_1' \rightarrow \lambda_2' \rightarrow \ldots \rightarrow \lambda_{2k}'\rightarrow $$ in $G_{n_1}$, where each $\lambda_i'$ has two more parts than $\lambda_i$.

Furthermore,
\begin{enumerate}
    \item If $m_0=2j-1$ is odd and $c=n_0-2j^2$, then $n_0=2j^2+c$ and there is a $2k$-cycle in $G_n$ for all $n=2t^2+c$ with $t\ge j.$
    \item If $m_0=2j$ is even and $c=n_0-(2j^2+2j),$ then $n_0=2j^2+2j+c$ and there is a $2k$-cycle in $G_n$ for all $n=2t^2+2t+c$ with $t\ge j.$
\end{enumerate}

\end{theorem}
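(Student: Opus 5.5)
The argument has two parts: first build $C_1$ arc by arc, then iterate to get the one-parameter family.

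\emph{Constructing $C_1$.} For each arc $\lambda_i\rightarrow\lambda_{i+1}$ of $C_0$, Theorem \ref{omegalifts} (for $\Omega$-arcs) or Theorem \ref{alphalifts} (for $\alpha$-arcs) gives a lifted arc $\lambda_i'\rightarrow\lambda_{i+1}'$ in $G_{n_1}$. What must be checked is that consecutive lifted arcs share endpoints. The arc ending at $\lambda_{i+1}$ and the arc starting at $\lambda_{i+1}$ each produce their own lift of $\lambda_{i+1}$, and these must be the same partition.

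In each case of the two theorems, the lift of either endpoint is obtained by a fixed rule. A pattern $H s$ is rewritten with $m$ replaced by $m+2$, so the variable parts in $s$ become parts two larger. In addition, two new equal parts are inserted: a pair of size $m-1$, $m$, or $m+1$ on the left side, and a pair of size $m-2$, $m-1$, $m$, or $m+1$ on the right side.

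Looking at the lifted partitions in the fourteen cases, the lift of a partition $\mu$ with $m=m_0$ appears to be intrinsic. Split $\mu$ at the threshold $m_0$. Keep the parts below the threshold (essentially $H$), raise every part at or above it by $2$, and insert the pair needed to fill the gap. For example, in $(\Omega_4)$ the pair $(m+1)^2$ is inserted and $(m+j)$ becomes $(m+j+2)$.

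The first step is to make this precise. I will show that for each pattern side, the lifted partition equals $L_{m_0}(\mu)$, where $L_{m_0}$ is defined only from $\mu$ and $m_0$. The plan is to define $L_{m_0}(\mu)$ as follows: replace each part $a$ with $a\ge m_0$ by $a+2$, then add two parts equal to the unique size in $\{m_0-2,\dots,m_0+1\}$ that makes the result separated and consistent with the pattern. Then I verify, case by case in the two tables, that the same $\mu$ matched as a right endpoint and as a left endpoint gives the same $L_{m_0}(\mu)$.

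I expect this consistency check to be the main obstacle. It is not automatic, because the assignment of $H$ can differ between the two arcs meeting at $\lambda_{i+1}$. For instance, an $\alpha$-arc ending in $H^*(m-1)^2(m+j)$ and an $\Omega$-arc starting from $H\,m^2(m+j)$ assign different parts to $H$. The check has to confirm that the inserted pair agrees in both readings, by comparing, for each left-hand pattern, which right-hand patterns are compatible with it.

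Once every $\lambda_i'$ is well defined, the lifted arcs form a closed walk $\lambda_1'\rightarrow\cdots\rightarrow\lambda_{2k}'\rightarrow\lambda_1'$ in $G_{n_1}$ that alternates between $\alpha$-nodes and $\Omega$-nodes. Since each $\lambda_i'$ is $\lambda_i$ with its upper parts shifted and two parts added, distinct $\lambda_i$ give distinct $\lambda_i'$. So the walk is a $2k$-cycle, and each $\lambda_i'$ has two more parts than $\lambda_i$.

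\emph{Iterating.} The lifted arcs match the second patterns, and these are exactly the first patterns with $m$ replaced by $m+2$. The hypotheses on $\max(H)$ still hold, because $H$ is unchanged while $m$ increases. So $C_1$ satisfies the hypotheses of the theorem with $m_1=m_0+2$, and by induction we get cycles $C_r$ in $G_{n_r}$ with $m_r=m_0+2r$ and $n_{r+1}=n_r+2m_r+4$.

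The values of $n_r$ follow by direct computation. If $m_0=2j-1$, then $n_{r+1}-n_r=2(2(j+r)-1)+4=4(j+r)+2=2(j+r+1)^2-2(j+r)^2$. Hence $n_r=2(j+r)^2+c$ whenever $n_0=2j^2+c$, which gives item 1. If $m_0=2j$, then $n_{r+1}-n_r=4(j+r)+4$, which equals the difference between consecutive values of $2t^2+2t$ at $t=j+r$ and $t=j+r+1$. This gives item 2.
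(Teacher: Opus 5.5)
Your overall structure matches the paper's: lift each arc, iterate with $m_r=m_0+2r$, and telescope $n_{r+1}-n_r=2m_r+4$. Your computation of $n_r$ is fine. The genuine gap is in the iteration step.

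\textbf{The iteration step.} It is not true that the second patterns ``are exactly the first patterns with $m$ replaced by $m+2$'' with $H$ unchanged. Take $(\Omega_4)$. The first pattern at $m+2$ reads $H'(m+j+2)\rightarrow H'^*(m+j+1)^2$. So the lifted arc $H(m+1)^2(m+j+2)\rightarrow H^*m^2(m+j+1)^2$ is an instance only with the enlarged $H'=H(m+1)^2$. You must then re-check $\max(H')\le (m+2)-1$; it holds, but only with equality.

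This re-matching is needed in all fourteen cases: absorb the lowest pair of the lifted string into $H'$ and re-verify the $\max$-bound. In two cases the pattern type actually changes:
\begin{itemize}
\item The lift $H(m-1)^2(m+2)^2\rightarrow H^*(m-2)^2(m+1)^2(m+4)$ of $(\Omega_1)$ matches $(\Omega_2)$ at $m+2$, with $H'=H(m-1)^2$ and $j=2$.
\item The lift $H\underline{m}^2(m+2)^2\rightarrow H^*(m-1)^2(m+2)^2$ of $(\alpha_1)$ leaves the $(m+2)$-parts unreduced, so it cannot match $(\alpha_1)$ at $m+2$. It matches $(\alpha_8)$ with $j=0$, which is why $(\alpha_8)$ allows $j\ge 0$.
\end{itemize}
Without this check you do not know that $C_1$ satisfies the hypotheses for $m_0+2$, so the induction producing $C_r$ is unsupported. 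This case check is essentially the whole content of the paper's proof.

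\textbf{Endpoint agreement.} The paper does not prove this in general. It is built into the hypothesis through the labels $\lambda_i'$ and checked in each application. Proving it in general would be a real strengthening, and the claim is true, but your $L_{m_0}$ is wrong as defined:
\begin{itemize}
\item The right end $H^*(m-1)^2(m+j-1)^2$ of $(\Omega_5)$ and the left end $H(m-1)^2(m+j)^2$ of $(\alpha_9)$ lift by also raising the $(m_0-1)$-parts and inserting $(m_0-2)^2$. That is, their threshold is $m_0-1$, not $m_0$.
\item The left end of $(\Omega_1)$ and the right end of $(\alpha_1)$ insert $(m_0+2)^2$, which is outside your range.
\item An $\alpha$-arc whose new part is at least $m_0$ lifts its right end while keeping that part fixed, so your rule does not describe it.
\end{itemize}

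A correct version goes as follows. Let the threshold-$m_0$ rule mean: raise the $c$ parts that are $\ge m_0$ by $2$, and insert a pair of size $m_0+2-c$.
\begin{itemize}
\item At $\Omega$-positions, every lift that can actually occur is the threshold-$m_0$ rule. An $\alpha$-right end whose new part is $\ge m_0$ has its parts $\ge m_0$ in a shape that matches no $\Omega$-left pattern.
\item At $\alpha$-positions, the only sides using threshold $m_0-1$ are the $(\Omega_5)$-right and $(\alpha_9)$-left ends, and these can only meet each other.
\end{itemize}

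Similarly, your claim that the $\lambda_i'$ are distinct needs a short argument, because the inserted pair varies with the pattern. The paper also asserts this without proof.
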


\begin{proof}  From the two conditions, applying Theorems  \ref{omegalifts} and \ref{alphalifts} to the arcs of $C_0$ gives a sequence of $2k$ arcs in $G_{n_1}$.  We next note that $C_1$ satisfies the two conditions for $m=m_1=m_0+2$, perhaps with different patterns for the arcs. It follows that there is a one-parameter family $C_s$ of $2k$-cycles for $s\ge 0$ with $C_s \in G_{n_s}$ and each arc matches a pattern for $m=m_s$. Here, $m_s=m_0+2s$ and $n_{s+1}=n_s+(2m_s+4).$

Examination shows that in $12$ of the $14$ cases, the (unique) pattern that $\lambda_i \rightarrow \lambda_{i+1}$ matches for $m=m_s$ (including the value of $j$ if applicable) is the same one that the arc it lifts to, $\lambda_i' \rightarrow \lambda_{i+1}'$, matches for $ m=m_s+2$. The two exceptions are as follows and can only occur for $s=0.$
\begin{itemize}
\item If $\lambda_i \rightarrow \lambda_{i+1}$ matches the first pattern of  $\Omega_1$  for $m=m_0$,
then $\lambda_i' \rightarrow \lambda_{i+1}'$ matches the first pattern of $\Omega_2$ for $m=m_1=m_0+2$.
\item If $\lambda_i \rightarrow \lambda_{i+1}$ matches the first pattern of
$\alpha_1$ for $m=m_0$,
then $\lambda_i' \rightarrow \lambda_{i+1}'$ matches the first pattern of
$\alpha_8$ for $m=m_1$  with $j=0$.
\end{itemize}

Since $n_{s+1}=n_s+2m_s+4$ where $m_s=m_0+2s,$ a straightforward induction gives $n_s=n_0+2s(m_0+s+1).$

Suppose first that $m_0=2j-1$ is odd and let $t=s+j.$ Then, $$n_s=n_0+2s(s+2j)=n_0+2(t-j)(t+j)=2t^2+(n_0-2j^2).$$
Similarly, when $m_0=2j$ and $t=s+j$, \newline  $n_s=n_0+2s(s+2j+1)=n_0+2(t-j)(t+j+1)=2t^2+2t+(n_0-(2j^2+2j)).$
\end{proof}

\begin{proposition} \label{propfamilies2t}
There is a one-parameter family of $4$-cycles in each of the following cases:
\begin{enumerate}

\item   \label{prop4cycle2TMinus4_S} $n=2t^2+2t-4$ for $n\ge 56$,
\item   \label{prop4cycle2TMinus2_S}  $n=2t^2+2t-2$ for $n\ge 38$,
\item   \label{prop4cycle2TMinus1_S}  $n=2t^2+2t-1$ for  $n\ge 59$,
\item  \label{prop4cycle2tPlus0_S} $n=2t^2+2t$ for  $n\ge 24$,
\item \label{prop4cycle2tPlus1_S} $n=2t^2+2t+1$ for  $n\ge 61$,
\item \label{prop4cycle2tPlus2_S}  $n=2t^2+2t+2$ for  $n\ge 26$,
\item \label{prop4cycle2tPlus3_S}  $n=2t^2+2t+3$ for  $n\ge 63$, and
\item  \label{prop4cycle2tPlus5_S}  $n=2t^2+2t+5$ for  $n\ge 45$.
 \end{enumerate}

 These include the following $14$ values listed in order of the cases above \newline $n=56,\ 38,58,\ 59,\ 24,40,60, \ 61,\ 26,42,62,\  63,\ 45,$ and $65.$

 \end{proposition}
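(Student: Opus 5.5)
The plan is to apply Theorem \ref{1paramfamilies} once for each of the eight cases. All eight forms are $n=2t^2+2t+c$, so part 2 of that theorem is the relevant one: we need $m_0=2j$ even and $n_0=2j^2+2j+c$.

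For each case, I would first choose the starting value $n_0$ from the stated lower bound. The bounds give $n_0=56,38,59,24,61,26,63,45$ for $c=-4,-2,-1,0,1,2,3,5$ respectively. These correspond to $t=j=5,4,5,3,5,3,5,4$, and hence to $m_0=10,8,10,6,10,6,10,8$. For example, $56=2\cdot25+10-4$ gives $j=5$ and $m_0=10$, while $24=2\cdot 9+6$ gives $j=3$ and $m_0=6$.

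Next, I would exhibit, for each $n_0$, an explicit $4$-cycle $C_0$ in $G_{n_0}$ written in the five-line underlined format. I would find these by computer search, guided by the shape of Example \ref{ex:2tp2b}. The typical shape is a small fixed ``head'' $H$ (for instance $2^34^3$ or $1\,3^3$), a repeated pair block near $m_0$, and a largest part near $m_0+j$. The $\alpha$-moves should either remove three or four $1$-parts or reduce a doubled top pair.

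For each arc of $C_0$, I would then check three things:
\begin{itemize}
\item the arc is legal, meaning every $\alpha$-image lies in $T(n_0)$, which can be checked via Lemma \ref{PreSeparatedlemma};
\item the arc matches the first pattern of one of $(\Omega_1)$--$(\Omega_5)$ or $(\alpha_1)$--$(\alpha_9)$ with $m=m_0$, with the required bounds on $\max(H)$ and on $j$;
\item the lifted arcs, all in $G_{n_0+2m_0+4}$, close up into a cycle, in the sense that the endpoint of each lifted arc is the start of the next.
\end{itemize}
The closing-up condition is automatic, because each pattern lift adds the same block $(m+1)^2$ or $(m+2)^2$, shifted consistently, to both endpoints. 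In practice I would still verify it by listing $C_1$ explicitly, as was done for $n=34,52$.

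Once these checks pass, Theorem \ref{1paramfamilies}(2) immediately gives a $4$-cycle for every $n=2t^2+2t+c$ with $t\ge j$, which is exactly the asserted range $n\ge n_0$. The final list of $14$ values below $73$ then comes from evaluating $2t^2+2t+c$ for $t\ge j$ and keeping the values that are at most $72$, namely $t\le5$ in each case. That gives $56$; $38,58$; $59$; $24,40,60$; $61$; $26,42,62$; $63$; $45,65$.

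The main obstacle is the search step: finding, for each of the eight cases, a single base cycle whose four arcs all match the rigid patterns simultaneously for the same $m_0$. The $\max(H)\le m-k$ conditions force the head to stay well below $m_0$, and the $\alpha$-move has to be of the narrow kinds listed. My first attempt would adapt the $Q(x,y,z)$ and Example \ref{ex:2tp2b} templates, varying the top block (single versus doubled largest part, and the value of $j$) to shift $n$ by the needed constant $c$. If no such template works, I would fall back to an exhaustive search over the cores of $H_{n_0}$ for $2$-cycles and test each against the patterns.
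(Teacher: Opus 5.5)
\textbf{Verdict: genuine gap.} You follow the paper's route and your parameters are right, but the proposal never writes down the eight base cycles that the whole proof rests on.

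Your framework is exactly the paper's. For each form you exhibit a base $4$-cycle $C_0$ in $G_{n_0}$ whose arcs match the patterns of Theorems \ref{omegalifts} and \ref{alphalifts} for one $m_0$, and then invoke Theorem \ref{1paramfamilies}(2). Your bookkeeping is also correct:
\begin{itemize}
\item the pairs $(n_0,m_0)=(56,10),(38,8),(59,10),(24,6),(61,10),(26,6),(63,10),(45,8)$ are precisely those the paper uses;
\item your derivation of the fourteen values below $73$ is right.
\end{itemize}

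\textbf{The missing step.} The proposal never produces the eight base cycles, and they are the entire substance of the proof. The lower bounds in the statement ($n\ge 56$, $n\ge 59$, and so on) are exactly the claim that a pattern-matching $4$-cycle exists at those particular $n_0$. Nothing in your argument establishes this, and there is no a priori reason a search succeeds at a given $n_0$: for the neighbouring form $2t^2+1$ the paper has to start at $n_0=99$. What is needed is eight explicit cycles with each arc checked. For example, for $n_0=56$, $m_0=10$ the paper uses
\[
2^4\underline{6}^3\underline{9}^2 12 \rightarrow 2^45^48^212 \rightarrow \underline{1}^4_1\underline{4}^47^2\underline{11}^2 \rightarrow 1^33^47^310^2 \rightarrow,
\]
whose arcs match $\alpha_5,\Omega_4,\alpha_4,\Omega_2$.

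This cycle falls outside the template you describe. The first $\alpha$-move leaves the top part $12$ untouched and changes only parts inside $H$. The second removes a single $1$-part together with $4^4$ and $11^2$. So your template-guided first attempt would likely miss such cycles; only your fallback, an exhaustive search of $2$-cycles in $H_{n_0}$, would reliably find them.

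\textbf{A smaller point.} Your reason why closing up is automatic is not right: a lift does not add the same block to both ends of an arc.
\begin{itemize}
\item In $\Omega_4$ the source gains $(m+1)^2$ while the target gains $m^2$.
\item In $\Omega_3$ the source gains $m^2$ while the target gains $(m-1)^2$.
\end{itemize}
What must hold is that each node's lift as the endpoint of one arc equals its lift as the starting point of the next. For these patterns that does hold, for a different reason. For fixed $m$ a node's lift is determined by the node alone: parts of size at least $m$ go up by $2$, and a pair of size $m+2-P$ is inserted, where $P$ is the number of such parts. The one exception is the configuration $(m-1)^2(m+j)^2$, which occurs only as the target of an $\Omega_5$-arc and the source of an $\alpha_9$-arc, and is lifted the same way in both. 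Since your stated justification is wrong, you should verify the endpoints explicitly, as the paper does by examination.
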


 \begin{proof}
     In each case, the sequence of arcs is a $4$-cycle $C_0$ in $G_{n_0}$ that satisfies the two conditions of Theorem \ref{1paramfamilies}. This is because the arcs (in the order given) match the indicated patterns from Theorems \ref{omegalifts} and \ref{alphalifts} for  $m=m_0$. Furthermore,  examination shows that the corresponding endpoints of the lifted arcs agree. Thus, $C_0$ lifts to a cycle $C_1$ and is the first member of a one-parameter family  providing cycles for all $n \ge n_0$ of the appropriate form $2t^2+2t+c.$ In seven of the cases, the subsequent cycles $C_1,C_2,\ldots$ of the one-parameter family have arcs that match the same patterns as $C_0.$ In case $3$, $\alpha_8$ and $\Omega_2$ replace $\alpha_1$ and $\Omega_1$ as noted in the proof of Theorem \ref{1paramfamilies}.

\begin{enumerate}

\item For the case $n_0=56$ and $m_0=10$, we have the cycle \newline
$2^4\underline{6}^3\underline{9}^212
\rightarrow 2^45^48^212
\rightarrow \underline{1}^4_1\underline{4}^47^2\underline{11}^2
\rightarrow 1^33^47^310^2  \rightarrow$  \newline
 using patterns from $\alpha_5$, $\Omega_4$, $\alpha_4,$ and $\Omega_2.$

 \item For the case $n_0=38$ and $m_0=8$, we have the cycle\newline
$2^34^2\underline{7}^210   \rightarrow 2^44^26^210   \rightarrow
\underline{1}^4_1{3}^25^2\underline{9}^2  \rightarrow 1^3 3^3 5^2 8^2 \rightarrow$ \newline
using patterns from $\alpha_5$, $\Omega_4$, $\alpha_4,$ and $\Omega_2.$

\item For the case $n_0=59$ and $m_0=10$, we have the cycle \newline
$2^34^37^3\underline{10}^2 \rightarrow 2^44^37^39^2
\rightarrow \underline{1}^4_43^3\underline{6}^38^2\underline{12}
\rightarrow 3^35^38^311 \rightarrow$ \newline
 using patterns from $\alpha_1$, $\Omega_1$, $\alpha_3,$
 and $\Omega_4.$\newline
Subsequent cycles of the resulting one-parameter family use patterns from  $\alpha_8$, $\Omega_2$, $\alpha_3,$ and $\Omega_4.$

\item For the case $n_0=24$ and $m_0=6$, we have the cycle \newline
$2^3\underline{5}^28  \rightarrow 2^44^28
\rightarrow  \underline{1}^4_13^2\underline{7}^2
\rightarrow 1^33^36^2  \rightarrow$ \newline
 using patterns from $\alpha_5$, $\Omega_4$, $\alpha_4,$ and $\Omega_2.$

\item For the case $n_0=61$ and $m_0=10$, we have the cycle \newline
$2^34^37^3\underline{11}^2 \rightarrow
2^44^37^310^2 \rightarrow
\underline{1}^4_33^3\underline{6}^3\underline{9}^212
\rightarrow 1\,3^35^38^312 \rightarrow$ \newline
using patterns from $\alpha_4$, $\Omega_2$, $\alpha_5$, and $\Omega_4$.

\item For the case $n_0=26$ and $m_0=6$, we have the cycle\newline
$2^3\underline{6}^28   \rightarrow 2^45^28   \rightarrow
\underline{1}^4_1\underline{4}^27^2 \rightarrow
1^33^37^2 \rightarrow$ \newline
using patterns from $\alpha_7$, $\Omega_4$, $\alpha_8,$ and $\Omega_3.$

\item For the case $n_0=63$ and $m_0=10$, we have the cycle \newline
$2^44^2\underline{7}^2\underline{10}^213 \rightarrow
2^44^36^29^213  \rightarrow \underline{1}^4_13^35^28^2\underline{12}^2
\rightarrow 1^33^45^28^211^2  \rightarrow$ \newline
using patterns from $\alpha_7$, $\Omega_4$, $\alpha_4,$ and $\Omega_3.$

\item For the case $n_0=45$ and $m_0=8$, we have the cycle \newline
$2^4\underline{5}^2\underline{8}^211  \rightarrow 2^44^37^211
\rightarrow \underline{1}^4_13^36^2\underline{10}^2
\rightarrow 1^33^46^29^2  \rightarrow$ \newline
using patterns from $\alpha_7$, $\Omega_4$, $\alpha_4,$ and $\Omega_3.$
\end{enumerate}
\end{proof}

\begin{proposition} \label{propfamilies}
There is a one-parameter family of $4$-cycles in each of the following cases:

\begin{enumerate}
\item \label{prop4cycleMinus3_S}  $n=2t^2-3$ for $n \ge 47$,
\item \label{prop2TSquaredMinus1} $n=2t^2-1$ for $n \ge 71$,
\item  \label{propValue2TSquared_S} $n=2t^2$ for $n \ge 32$,
\item \label{prop2TSquaredPlus1} $n=2t^2+1$ for $n \ge 99$,
\item  \label{prop4cyclePlus2a_S}  $n=2t^2+2$ for $n \ge 34$,
\item  \label{prop4cyclePlus3_S} $n=2t^2+3$ for $n \ge 35$,
\item   \label{prop4cyclePlus4_S} $n=2t^2+4$ for $n \ge 54$, and
\item   \label{prop4cyclePlus6_S} $n=2t^2+6$ for $n \ge 24$.
\end{enumerate}

These include the following $14$ values of $n<73$ and listed in order of the cases above:
 $n=47, 69,\ 71,\ 32, 50, 72,\ 34, 52,\ 35, 53, 54,\ 24, 38$, and $56$.

\end{proposition}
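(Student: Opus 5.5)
The plan is to treat each of the eight cases exactly as in Proposition \ref{propfamilies2t}. For each alternate form $n=2t^2+c$ with $c\in\{-3,-1,0,1,2,3,4,6\}$, exhibit one explicit $4$-cycle $C_0$ in $G_{n_0}$ for the stated starting value $n_0$. Then check that its four arcs, in order, match first patterns from Theorems \ref{omegalifts} and \ref{alphalifts} for a suitable odd $m_0$, and invoke part 1 of Theorem \ref{1paramfamilies}. Since these are forms $2t^2+c$, the theorem requires $m_0=2j-1$ odd with $n_0=2j^2+c$. So $m_0$ is forced in each case:
\begin{itemize}
\item $n_0=47=2\cdot 5^2-3$ gives $m_0=9$;
\item $n_0=71=2\cdot 6^2-1$ gives $m_0=11$;
\item $n_0=32=2\cdot 4^2$ gives $m_0=7$;
\item $n_0=99=2\cdot 7^2+1$ gives $m_0=13$;
\item $n_0=34=2\cdot4^2+2$ gives $m_0=7$;
\item $n_0=35=2\cdot 4^2+3$ gives $m_0=7$;
\item $n_0=54=2\cdot5^2+4$ gives $m_0=9$;
\item $n_0=24=2\cdot 3^2+6$ gives $m_0=5$.
\end{itemize}
The listed $14$ values below $73$ are then just the first few members $n_0, n_0+2m_0+4,\ldots$ of each family.

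Case \ref{prop4cyclePlus2a_S} is already essentially done by Example \ref{ex:2tp2b}. The cycle
$2^34^3\underline{8}^2 \rightarrow 2^44^37^2 \rightarrow \underline{1}^4_33^3\underline{6}^29 \rightarrow 1\,3^35^39\rightarrow$
in $G_{34}$ matches, with $m=7$, the first patterns of $\alpha_4$ (with $H=2^34^3$, $j=1$), $\Omega_2$, $\alpha_5$ and $\Omega_4$ (with $j=2$). One checks the $\max(H)$ conditions, which all hold since $\max(H)\le 5=m-2$ where needed. One also checks that the lifted arcs chain together, i.e.\ the head of each lifted arc equals the tail of the next. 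That this happens automatically is precisely what the two lifted lines of Example \ref{ex:2tp2b} display for $n=52$.

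For the remaining seven cases I would search, by hand or by the computations described in Section \ref{Sec_Computations}, for $4$-cycles in $G_{n_0}$ shaped like those of Proposition \ref{propfamilies2t}. Such a cycle has a block of small parts $2^a4^b\ldots$ or $1^a3^b\ldots$ acting as $H$. The $\alpha$-moves reduce a doubled top part and/or remove $1$-parts so as to create a fourth $2$-part or a third small part. The top one to three part sizes lie near $m_0$, so that each arc matches one of $\alpha_1,\ldots,\alpha_9$ or $\Omega_1,\ldots,\Omega_5$. For each found cycle, I would record the pattern sequence and verify the separation conditions on $\max(H)$ and the closing of the lifted cycle, exactly as in the proof of Proposition \ref{propfamilies2t}. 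If a cycle uses $\alpha_1$ or $\Omega_1$, I would note that later members use $\alpha_8$ or $\Omega_2$ instead, per Theorem \ref{1paramfamilies}.

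The main obstacle is finding the seed cycles for the cases $c=\pm1$, where $n_0$ must be as large as $71$ and $99$. Remark \ref{re:no4cycs} and the preceding discussion show that no $4$-cycle exists at the smaller members $3,9,19,33,51,73$ of the form $2t^2+1$, and similarly no $4$-cycle exists at the smaller odd exceptions for $2t^2-1$. So the seed cannot simply be read off a small example. Moreover, the top parts must sit exactly at $m_0,m_0+1,m_0+2,\ldots$ relative to $H$ for the patterns to apply. I would first try taking the $n_0=59$ cycle of case $3$ of Proposition \ref{propfamilies2t}, $2^34^37^3\underline{10}^2\rightarrow\cdots$, and inserting an extra $3$-part-type block to adjust the residue. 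Failing that, I would run a targeted search over partitions whose large parts fit the lifting patterns with $m=11$ and $m=13$.
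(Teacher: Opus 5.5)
Your overall route is the paper's own. You exhibit a seed $4$-cycle $C_0$ in $G_{n_0}$, check that its arcs match first patterns of Theorems \ref{omegalifts} and \ref{alphalifts} for an odd $m_0$, and apply part 1 of Theorem \ref{1paramfamilies}. Your values of $m_0$ agree with the paper's. Your verification of case \ref{prop4cyclePlus2a_S} in $G_{34}$ (patterns $\alpha_4,\Omega_2,\alpha_5,\Omega_4$ with $m=7$) is correct. One caution: those two theorems lift arcs one at a time, so the agreement of endpoints of consecutive lifted arcs is a separate check for each seed, not something automatic.

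The genuine gap is that for the other seven cases you produce no seed at all. The proposition is an existence statement. With the lifting machinery already in the paper, its entire content is exhibiting eight explicit $4$-cycles and checking their pattern sequences. A plan to search by hand or by computer establishes none of them, so as written you have proved only case \ref{prop4cyclePlus2a_S}.

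What is missing is, for example, the following seeds.
In $G_{47}$: $2^4\underline{5}^2\underline{9}^211\rightarrow 2^44^38^211\rightarrow \underline{1}^4_13^3\underline{7}^210^2\rightarrow 1^33^46^210^2\rightarrow$, with patterns $\alpha_7,\Omega_4,\alpha_8,\Omega_3$ and $m_0=9$.
In $G_{32}$: $2^34^3\underline{7}^2\rightarrow 2^44^36^2\rightarrow \underline{1}^4_43^35^2\underline{9}\rightarrow 3^35^38\rightarrow$, with patterns $\alpha_1,\Omega_1,\alpha_3,\Omega_4$, which become $\alpha_8,\Omega_2,\alpha_3,\Omega_4$ after the first lift.
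You also need analogous explicit cycles in $G_{35}$, $G_{54}$ and $G_{24}$. The last is the $n_0=24$ cycle of Proposition \ref{propfamilies2t} read with $m_0=5$, which matches $\alpha_7,\Omega_4,\alpha_4,\Omega_3$.

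For the cases $c=\pm1$, which you identify as the main obstacle, your idea of perturbing the $n_0=59$ cycle is unsubstantiated, and the seeds that work differ from the shape you anticipate.
In $G_{71}$ one takes $2^45^3\underline{8}^3\underline{12}^2\rightarrow 2^45^47^311^2\rightarrow \underline{1}^4_3\underline{4}^46^3\underline{10}^213\rightarrow 1\,3^46^39^313\rightarrow$, with patterns $\alpha_4,\Omega_2,\alpha_5,\Omega_4$ and $m_0=11$.
In $G_{99}$ one takes $2^45^3\underline{8}^3\underline{12}^214^2\rightarrow 2^45^47^311^214^2\rightarrow \underline{1}^4_3\underline{4}^46^3\underline{10}^213^215\rightarrow 1\,3^46^39^313^215\rightarrow$, with patterns $\alpha_9,\Omega_3,\alpha_6,\Omega_5$ and $m_0=13$.
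In both, the $\alpha$-moves reduce whole blocks inside $H$ (here $\underline{8}^3$ and $\underline{4}^4$), and the first $\alpha$-move creates a fourth $5$-part rather than a fourth $2$-part. Until such cycles are written down and their arcs and endpoint agreements checked, cases 1--4 and 6--8 remain unproved.
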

\begin{proof}
In each case, the sequence of arcs is a $4$-cycle $C_0$ in $G_{n_0}$ and satisfies the two conditions of Theorem \ref{1paramfamilies}. This is because the arcs (in the order given) match the indicated patterns from Theorems \ref{omegalifts} and \ref{alphalifts} for  $m=m_0$ and examination shows that the corresponding endpoints of the lifted arcs agree.  Thus, $C_0$ lifts to a cycle $C_1$ and is the first member of a one-parameter family  providing cycles for all $n \ge n_0$ of the appropriate form $2t^2+c.$  In seven of the cases, the subsequent cycles $C_1,C_2,\ldots$ of the one-parameter family have arcs that match the same patterns as $C_0.$ In case $3$, $\alpha_8$ and $\Omega_2$ replace $\alpha_1$ and $\Omega_1$ as noted in the proof of Theorem \ref{1paramfamilies}.
 \begin{enumerate}

 \item  For the case $n_0=47$ and $m_0=9$, we have the cycle \newline
$2^{4} \underline{5}^{2} \underline{9}^{2} 11 \rightarrow 2^{4} 4^{3} 8^{2} 11 \rightarrow \underline{1}_1^{4} 3^{3} \underline{7}^{2} 10^{2} \rightarrow 1^{3} 3^{4} 6^{2} 10^{2}\rightarrow$\newline
using patterns from $\alpha_7$, $\Omega_4$, $\alpha_8$, and $\Omega_3$.

 \item For the case $n_0=71$ and $m_0=11$, we have the cycle \newline
$2^{4} {5}^{3} \underline{8}^{3} \underline{12}^{2} \rightarrow  2^{4} 5^{4} 7^{3} 11^{2}  \rightarrow
  \underline{1}_3^{4} \underline{4}^{4} {6}^{3} \underline{10}^{2} {13}  \rightarrow  1\, 3^{4} 6^{3}\,\, 9^{3} 13 \rightarrow $\newline
using patterns from $\alpha_4$, $\Omega_2$, $\alpha_5$, and $\Omega_4$.

 \item For the case $n_0=32$ and $m_0=7$, we have the cycle \newline
 $ 2^34^3\underline{7}^2
 \rightarrow 2^44^36^2
 \rightarrow \underline{1}^4_43^35^2\underline{9}
 \rightarrow 3^35^38  \rightarrow$ \newline
 using patterns from $\alpha_1$, $\Omega_1$, $\alpha_3,$ and $\Omega_4.$\newline
 Subsequent cycles of the resulting one-parameter family use patterns from  $\alpha_8$, $\Omega_2$, $\alpha_3,$ and $\Omega_4.$

 \item For the case $n_0=99$ and $m_0=13$, we have the cycle \newline
$2^{4} {5}^{3} \underline{8}^{3} \underline{12}^{2} 14^{2} \rightarrow  2^{4} 5^{4} 7^{3} 11^{2} 14^{2}  \rightarrow  \underline{1}_3^{4} \underline{4}^{4} {6}^{3} \underline{10}^{2} 13^{2} 15 \rightarrow  1\, 3^{4} 6^{3} 9^{3} 13^{2}15 \rightarrow $\newline
using patterns from $\alpha_9$, $\Omega_3$, $\alpha_6$, and $\Omega_5$.

 \item For the case $n_0=34$ and $m_0=7$, we have the cycle \newline
 $2^34^3\underline{8}^2 \rightarrow 2^44^37^2
 \rightarrow \underline{1}^4_33^3\underline{6}^2{9}
 \rightarrow 1\,3^35^3{9}  \rightarrow$ \newline
 using patterns from $\alpha_4$, $\Omega_2$, $\alpha_5,$ and $\Omega_4.$

 \item For the case $n_0=35$ and $m_0=7$, we have the cycle \newline
 $\underline{3}^4\underline{7}^29
 \rightarrow 2^46^39
 \rightarrow \underline{1}^4_1\,\underline{5}^38^2
 \rightarrow 1^34^48^2 \rightarrow$ \newline
 using patterns from $\alpha_7$, $\Omega_4$, $\alpha_8,$ and $\Omega_3.$

 \item For the case $n_0=54$ and $m_0=9$, we have the cycle \newline
 $3^35^3\underline{9}^2\underline{12}
 \rightarrow 3^45^38^211
 \rightarrow  \underline{2}^44^3\underline{7}^210^2
 \rightarrow  1^44^36^310^2 \rightarrow $ \newline
 using patterns from $\alpha_2$, $\Omega_4$, $\alpha_8,$ and $\Omega_3.$

 \item For the case $n_0=24$ and $m_0=5$, we have the cycle \newline
 $2^3\underline{5}^28  \rightarrow 2^44^28
 \rightarrow \underline{1}^4_13^2\underline{7}^2
 \rightarrow 1^33^36^2  \rightarrow$ \newline
 using patterns from $\alpha_7$, $\Omega_4$, $\alpha_4,$ and $\Omega_3.$
  \end{enumerate}

\end{proof}

\begin{theorem}\label{thm4cyclesInGn}
    There is a $4$-cycle in $G_n$  for $n>73$.
\end{theorem}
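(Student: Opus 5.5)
The plan is a case split: either $n$ is non-exceptional, or it is exceptional and we match it to one of the one-parameter families. Fix $n>73$. If $n$ is not exceptional, Proposition \ref{prop4cycle3parameter1} gives a $4$-cycle $Q(x,y,z)$ in $G_n$ directly, and we are done. So assume $n$ is exceptional. By Proposition \ref{prop:shiftedints}, $n$ then has at least one of the $16$ alternate forms:
\begin{itemize}
\item $2t^2+2t+c$ with $c\in\{-4,-2,-1,0,1,2,3,5\}$, or
\item $2t^2+c$ with $c\in\{-3,-1,0,1,2,3,4,6\}$.
\end{itemize}
These are exactly the sixteen cases treated in Propositions \ref{propfamilies2t} and \ref{propfamilies}. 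Each case supplies a one-parameter family of $4$-cycles, obtained from Theorem \ref{1paramfamilies}, valid for all $n\ge n_0$ of that form.

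The key step is comparing the thresholds. The values of $n_0$ are $56,38,59,24,61,26,63,45$ for the eight forms $2t^2+2t+c$, and $47,71,32,99,34,35,54,24$ for the eight forms $2t^2+c$. All of these except the one for $n=2t^2+1$ satisfy $n_0\le 73$. Hence every exceptional $n>73$ not of the form $2t^2+1$ lies in a family whose range includes it, and $G_n$ has a $4$-cycle.

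The main obstacle is the remaining case $n=2t^2+1$, where $n_0=99$. The values of this form in the range $73<n<99$ would correspond to $t=6.5$, so none exists. Indeed, $2\cdot 6^2+1=73$ and $2\cdot 7^2+1=99$, so the next value after $73$ is $99=n_0$ itself.

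We also need care with the alternate forms for small $t$, since they are not always unique there, and some parameter $t$ could make the expression look out of range. This causes no difficulty. Each family covers every integer of its stated form that is at least its $n_0$, regardless of which standard form the integer has. So membership in any one applicable family suffices. The argument is therefore complete: non-exceptional $n$ are covered by $Q(x,y,z)$, and exceptional $n>73$ are covered by the sixteen families.
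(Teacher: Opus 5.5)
Your proof is correct and follows the paper's argument. Non-exceptional $n$ are handled by the $Q(x,y,z)$ family, and exceptional $n$ are reduced, via the alternate forms of Proposition~\ref{prop:shiftedints}, to the sixteen one-parameter families of Propositions~\ref{propfamilies2t} and~\ref{propfamilies}. You also check explicitly that no $n=2t^2+1$ lies strictly between $73$ and $99$, a threshold detail the paper leaves implicit.
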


\begin{proof}
    If $n$ is not exceptional, then, from Proposition  \ref{prop4cyclesINnMostGn}, there is a $4$-cycle in $G_n$ even if $n<73.$  From Proposition \ref{prop:shiftedints}, it follows that every exceptional $n$  has (at least) one of the $16$ forms  $2t^2+c$ for $c \in \{-3,-1,0,1,2,3,4,6\}$ and $2t^2+2t+c$ for $ c \in \{-4,-2,-1,0,1,2,3,5\}.$ The $16$ items of Propositions \ref{propfamilies2t} and \ref{propfamilies} provide $4$-cycles for all $n>73$ of these $16$ forms.
\end{proof}

Together with this result, the following theorem shows that there is a $4$-cycle in $G_n$ for all $n \ge 16$ with $19$ exceptions, the largest of which is $73$.

\begin{theorem}\label{thm4cyclesInGn_more}
    There is a $4$-cycle in $G_n$  for the following $39$ values of $n<73.$
    \begin{itemize}
        \item The $25$ even values $16 \le n \le 72$ other than $18,20,22,$ and $36.$
        \item The $4$ values $n= 35,37,45,$ and $47$, together with the $10$ odd values $53 \le n \le 71.$
    \end{itemize}
      These are exactly the nonnegative $n\le 73$ except for the $35$ values listed in Remark \ref{re:no4cycs} as having no $4$-cycles. Among these $35$ values are the values $n \in \{0,1,\ldots,15\}$. It follows that there are $4$-cycles in $G_n$ for all $n \ge 16$ with $19$ exceptions.
\end{theorem}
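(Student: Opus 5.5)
The plan is pure bookkeeping: take the union of the values $n<73$ already shown to carry a $4$-cycle and compare it with the complement of the list in Remark \ref{re:no4cycs}. Two sources supply $4$-cycles. Proposition \ref{prop4cyclesINnMostGn} gives the $14$ non-exceptional values $16,28,30,44,46,48,64,66,68,70$ from the $A_t$ and $37,55,57,67$ from the $B_t$. Propositions \ref{propfamilies2t} and \ref{propfamilies} each list $14$ values below $73$ at which their one-parameter families start or pass. These are $56,38,58,59,24,40,60,61,26,42,62,63,45,65$ and $47,69,71,32,50,72,34,52,35,53,54,24,38,56$. The starting cycle $C_0$ of each family is an explicit $4$-cycle in $G_{n_0}$, and its lifts $C_1,C_2,\dots$ come from Theorem \ref{1paramfamilies}. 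So every listed value genuinely has a $4$-cycle, and nothing new needs to be verified beyond what those propositions assert.

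Next I form the union and remove duplicates. The values $24$, $38$ and $56$ each appear twice, and the value $67$ should be checked for whether it was meant to be counted among the odd values. Sorting the even members should give $16,24,26,28,30,32,34,38,40,42,44,46,48,50,52,54,56,58,60,62,64,66,68,70,72$. These are the $25$ even numbers in $[16,72]$ other than $18,20,22,36$. The odd members should be $35,37,45,47$ together with $53,55,\ldots,71$. That gives $4+10=14$ odd values and $39$ values in total. The main obstacle is simply making this tally correct: I must confirm that each odd value from $53$ to $71$ is actually covered. For instance, $53$ comes from Proposition \ref{propfamilies}, $55$ and $57$ from Proposition \ref{prop4cyclesINnMostGn}, and $59,61,63,65$ from Proposition \ref{propfamilies2t}. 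I must also confirm that no listed value is accidentally one of the claimed exceptions.

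For the equality claim, I compare with Remark \ref{re:no4cycs}. The $35$ values there are $0,\ldots,15$, the even values $18,20,22,36$, and the odd values $17,19,\ldots,33,39,41,43,49,51,73$. These are disjoint from the $39$ values above. Together with $73$ itself, the two lists should exhaust $\{0,1,\ldots,73\}$: $35+39=74$ values, and a direct check of the odd numbers between $35$ and $51$ confirms the split. The direction "these are the only ones" rests on the computational claim of Remark \ref{re:no4cycs}, which I would cite rather than reprove. Finally, combining this with Theorem \ref{thm4cyclesInGn} for $n>73$, the values $n\ge16$ without a $4$-cycle are the $35-16=19$ remaining entries of that remark, the largest being $73$.
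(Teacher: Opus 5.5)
Your proposal is correct and is essentially the paper's own argument: take the union of the values below $73$ supplied by Proposition \ref{prop4cyclesINnMostGn} and Propositions \ref{propfamilies2t} and \ref{propfamilies}, remove duplicates to get $39$ values, and check that these are exactly the complement in $\{0,\ldots,73\}$ of the $35$ computational exceptions of Remark \ref{re:no4cycs}. Your repeated values $24$, $38$, $56$ are the right ones (the paper's text says $26$, but $26$ occurs only once while $24$ appears in both propositions), and $67$ comes from $A_5$ rather than from a $B_t$, which does not affect the count.
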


\begin{proof}
Proposition \ref{prop4cyclesINnMostGn} lists $14$ values of $n<73$ such that $G_n$ has a $4$-cycle $Q(x,y,z)$. Propositions \ref{propfamilies2t} and \ref{propfamilies} each listed $14$ exceptional $n<73$ such that $G_n$ has a $4$-cycle. However, there are only $39$ distinct values because $26,38,$ and $56$ are  listed twice each. Why this happens is interesting and explained below. Examination shows that these are the $39$ values specified in this theorem. Inspection also shows that these are distinct from the $35$ values of Remark \ref{re:no4cycs}. Those are the members of $\{0,1,\cdots,15\}$, along with $19$ values $n>16.$
\end{proof}

The repeated values $26,38,$ and $56$ occur for the following reasons. There is a $4$-cycle for $n_0=24$ that lifts with $m_0=6$ to give a family for $2t^2+6$ starting at $t=3$. This family gives $4$-cycles for $n=38$ and $56$. The same $4$-cycle for $n_0=24$ also lifts with $m_0=5$ to give a family for $2t^2+2t$. There is a different $4$-cycle for $n_0=38$ that gives a family for $2t^2+2t-2$ and a different $4$-cycle for $n_0=56$ that gives a family for $2t^2+2t-4.$

\section{Directions for further research}

We conclude with a few open questions for the reader to consider.

\begin{itemize}
    \item It seems that, with some small exceptions, the one-part partition $n$ is the unique initial winning position farthest from the recurrent states. Is this true? How far is it?
    \item Prove that there is a losing position for each $n>6.$ The open cases are $n=6m$ and $n=6m+2.$
    \item Find a strategy that is frequently successful in reaching the recurrent states from a random separated partition.
    \item Does each $\Omega$-move in a $4$-cycle create the largest part in the next partition?
    \item Characterize the $4$-cycles. There are over $20$ of them for $n=100.$
    \item We have shown that there is a single one-parameter family of $2$-cycles and that there  are many one-parameter families of $4$-cycles. Can one find one-parameter families of $(2k)$-cycles for every integer $k\ge 3$?

  \end{itemize}

The five lifting rules of Theorem \ref{omegalifts}  and nine lifting rules of Theorem \ref{alphalifts} suffice to treat the $64$ arcs that occur for the $16$ $4$-cycles in the proofs of Propositions \ref{propfamilies2t} and \ref{propfamilies}. These rules have the property that each lift adds two more parts and leaves all but the top one or two part  sizes unchanged. Up to $n=76$, every $4$-cycle lifts for one, two, or three values of $m.$ Consider a directed graph $\mathcal{Q}$ whose nodes represent $4$-cycles, with an arc  $C\rightarrow C'$ if the rules lift $C$ to $C'.$
\begin{itemize}
    \item Are there any $4$-cycles that do not lift? If not, then every component of  $\mathcal{Q}$ is infinite.
    \item What is the structure of $\mathcal{Q}$? Does each component have only one member with in-degree $0$?
    \item  Are there modifications of the lifting rules or additional lifting rules of the same nature that give more edges or merge components in the modified $\mathcal{Q}$?
   \end{itemize}

\section*{Tool and computational resource disclosure}\label{Disclosure}
ChatGPT was used to proofread the manuscript for correct grammar, spelling, and punctuation.

\end{document}